\theoremstyle{plain}
\newtheorem{theorem}{Theorem}[section]
\newtheorem{lemma}[theorem]{Lemma}
\newtheorem{proposition}[theorem]{Proposition}
\theoremstyle{definition}
\newtheorem{definition}[theorem]{Definition}
\theoremstyle{remark}
\newtheorem{remark}[theorem]{Remark}
\begin{document}

	\title[Characterization of rings with planar, toroidal or projective planar prime ideal sum graphs]{Characterization of rings with planar, toroidal or projective planar prime ideal sum graphs}
	
	\author[Praveen Mathil, Barkha Baloda, Jitender Kumar, A. Somasundaram]{Praveen Mathil$^{\dagger}$, Barkha Baloda$^{\dagger}$, Jitender Kumar$^{\dagger, *}$, A. Somasundaram$^{\mathsection}$}

\begin{abstract}
Let $R$ be a commutative ring with unity. The prime ideal sum graph $\text{PIS}(R)$ of the ring $R$ is the simple undirected graph whose vertex set is the set of all nonzero proper ideals of $R$ and two distinct vertices $I$ and $J$ are adjacent if and only if $I + J$ is a prime ideal of $R$. In this paper, we study some interplay between algebraic properties of rings and graph-theoretic properties of their prime ideal sum graphs. In this connection, we classify non-local commutative Artinian rings $R$ such that $\text{PIS}(R)$ is of crosscap at most two. We prove that there does not exist a non-local commutative Artinian ring whose prime ideal sum graph is projective planar. Further, we classify non-local commutative Artinian rings of genus one prime ideal sum graphs.
\end{abstract}

\subjclass[2020]{05C25}
\keywords{Non-local ring, ideals, genus and crosscap of a graph, prime ideal.\\
*  Corresponding author \\
$^{\dagger}$ Department of Mathematics, Birla Institute of Technology and Science Pilani, Pilani-333031 (Rajasthan), India \\
$^{\mathsection}$ Department of General Sciences, Birla Institute of Technology and Science, Pilani, Dubai Campus, Dubai, UAE \\ Email address: maithilpraveen@gmail.com, barkha0026@gmail.com, jitenderarora09@gmail.com, asomasundaram@dubai.bits-pilani.ac.in}

\maketitle

\section{Introduction}
The investigation of algebraic structures through graph-theoretic properties of the associated graphs has become a fascinating research subject over the past few decades. Numerous graphs attached with ring structures have been studied in the literature (see, \cite{afkhami2012generalized, akbari2009total, anderson2008zero, anderson2008total, anderson1999zero, beck1988coloring, behboodi2011annihilating, biswas2022subgraph, maimani2008comaximal}). Because of the crucial role of ideals in the theory of rings, many authors have explored the graphs attached to the ideals of rings also, for example, inclusion ideal graph \cite{akbari2015inclusion}, intersection graphs of ideals \cite{chakrabarty2009intersection}, ideal-relation graph \cite{ma2016automorphism}, co-maximal ideal graph \cite{ye2012co}, prime ideal sum graph \cite{saha2022prime} etc. Topological graph theory is principally related to the embedding of a graph on a surface without edge crossing. Its applications lie in electronic printing circuits where the purpose is to embed a circuit, that is, the graph on a circuit board (the surface) without two connections crossing each other, resulting in a short circuit. To determine the genus and crosscap of a graph is a fundamental but highly complex problem. Indeed, it is NP-complete. Many authors have investigated the problem of finding the genus of zero divisor graphs of rings in \cite{akbari2003zero, asir2020classification, belshoff2007planar,  wang2006zero, wickham2008classification, wickham2009rings}. Genus and crosscap of the total graph of the ring were investigated in \cite{asir2019classification, khashyarmanesh2013projective, maimani2012rings,chelvam2013genus}.
 Asir \emph{et al.} \cite{asir2018classification} determined all isomorphic classes of commutative rings whose ideal based total graph has genus at most two. Pucanovi\'{c} \emph{et al.} \cite{pucanovic2014toroidality} classified the planar and toroidal graphs that are intersection ideal graphs of Artinian commutative rings. In \cite{pucanovic2014genus}, all the graphs of genus two that are intersection graphs of ideals of some commutative rings are characterized. Ramanathan \cite{ramanathan2021projective} determined all Artinian commutative rings whose intersection ideal graphs have crosscap at most two. Work related to the embedding of graphs associated with other algebraic structures on a surface can be found in \cite{afkhami2015planar, anitha2020characterization, das2015genus, das2016genus, kalaimurugan2021genus, kalaimurugan2022genus, khorsandi2022nonorientable, ma2020finite, rilwan2020genus, selvakumar2022genus, tamizh2020genus}.

 Recently, Saha \emph{et al.} \cite{saha2022prime} introduced and studied the prime ideal sum graph of a commutative ring. The \emph{prime ideal sum graph} $\text{PIS}(R)$ of the ring $R$ is the simple undirected graph whose vertex set is the set of all nonzero proper ideals of $R$ and two distinct vertices $I$ and $J$ are adjacent if and only if $I + J$ is a prime ideal of $R$. Authors of \cite{saha2022prime} studied the interplay between graph-theoretic properties of $\text{PIS}(R)$ and algebraic properties of ring $R$. In this connection, they investigated the clique number, the chromatic number and the domination number of prime ideal sum graph $\text{PIS}(R)$. The purpose of this article is to investigate the prime ideal sum graph $\text{PIS}(R)$ to a greater extent. In this connection, we discuss the question of embedding of $\text{PIS}(R)$ on various surfaces without edge crossing. This paper aims to characterize all commutative non-local Artinian rings for which $\text{PIS}(R)$ has crosscap at most two. We also characterise all the non-local commutative Artinian rings whose prime ideal sum graph is toroidal. Moreover, we classify all the non-local commutative Artinian rings for which  $\text{PIS}(R)$ is planar and outerplanar, respectively. The paper is arranged as follows. Section 2 comprises basic definitions and necessary results. In Section 3, we classify all the non-local commutative Artinian rings $R$ for which $\text{PIS}(R)$ has genus one. Also, we determine all the non-local commutative Artinian rings for which $\text{PIS}(R)$ has crosscap at most two. 

 \section{preliminaries}
A \emph{graph} $\Gamma$ is a pair  $(V(\Gamma), E(\Gamma))$, where $V(\Gamma)$ and $E(\Gamma)$ are the set of vertices and edges of $\Gamma$, respectively. Two distinct vertices $u_1$ and $u_2$ are $\mathit{adjacent}$, denoted by $u_1 \sim u_2$ (or $(u_1, \ u_2)$), if there is an edge between $u_1$ and $u_2$. Otherwise, we write as $u_1 \nsim u_2$. If $X \subseteq V(\Gamma)$ then the subgraph $\Gamma(X)$ induced by $X$ is the graph with vertex set $X$ and two vertices of $\Gamma(X)$ are adjacent if and only if they are adjacent in $\Gamma$. For other basic graph theoretic definitions and concepts, we refer the reader to \cite{westgraph, white1985graphs}.
A graph $\Gamma$ is \emph{outerplanar} if it can be embedded in the plane such that all vertices lie on the outer face of $\Gamma$. In a graph $\Gamma$, the \emph{subdivision} of an edge $(u,v)$ is the deletion of $(u,v)$ from $\Gamma$ and the addition of two edges $(u,w)$ and $(w,v)$ along with a new vertex $w$. A graph obtained from $\Gamma$ by a sequence of edge subdivision is called a subdivision of $\Gamma$. Two graphs are said to be \emph{homeomorphic} if both can be obtained from the same graph by subdivisions of edges. A graph $\Gamma$ is \emph{planar} if it can be drawn on a plane without edge crossing. It is well known that every outerplanar graph is a planar graph. The following results will be useful for later use.

\begin{theorem}\label{outerplanar criteria}\cite{westgraph}
A graph $\Gamma$ is outerplanar if and only if it does not contain a subdivision of $K_4$ or $K_{2,3}$.
\end{theorem}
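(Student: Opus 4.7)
The plan is to reduce this characterization of outerplanarity to Kuratowski's theorem by means of the well-known device that attaches an extra apex vertex. Specifically, I would prove the lemma that a graph $\Gamma$ is outerplanar if and only if the graph $\Gamma^+ := \Gamma + K_1$, obtained by adjoining one new vertex $v_0$ adjacent to every vertex of $\Gamma$, is planar. One direction is immediate from any outerplanar embedding of $\Gamma$: since all of $V(\Gamma)$ lies on the outer face, we can place $v_0$ in that face and draw edges to every other vertex without crossings. The converse is essentially the same picture read backwards: given a planar embedding of $\Gamma^+$, the face structure around $v_0$ shows that its neighbors, i.e. all vertices of $\Gamma$, lie on a common face of $\Gamma$.

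For the forward direction of the theorem itself, I would first check that neither $K_4$ nor $K_{2,3}$ is outerplanar. For $K_4$ this follows from the standard edge bound $|E| \leq 2|V|-3$ for outerplanar graphs, which fails since $|E(K_4)| = 6 > 5$. For $K_{2,3}$, every planar embedding has each face of length exactly $4$ (the graph being bipartite and triangle-free with the face counts forced by Euler's formula), so the outer face contains only four of the five vertices. Since outerplanarity is visibly preserved under taking subgraphs and under smoothing degree-two vertices (un-subdividing), no outerplanar graph can contain a subdivision of $K_4$ or $K_{2,3}$.

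For the harder converse, suppose $\Gamma$ is not outerplanar. By the apex lemma, $\Gamma^+$ is not planar, so Kuratowski's theorem yields in $\Gamma^+$ a subdivision $H$ of $K_5$ or of $K_{3,3}$. I now translate $H$ back to $\Gamma$ according to the role of the apex $v_0$. If $v_0 \notin V(H)$, then $H \subseteq \Gamma$ already supplies a subdivision of $K_5$ (resp.\ $K_{3,3}$), which in turn contains a subdivision of $K_4$ (resp.\ $K_{2,3}$) by discarding one branch vertex together with its three incident subdivided paths. If $v_0 \in V(H)$, then $v_0$ must be a branch vertex (its neighbors in $\Gamma^+$ all have degree one in $\Gamma$-paths), and removing $v_0$ from $H$ leaves in $\Gamma$ a subdivision of $K_4$ in the $K_5$-case and a subdivision of $K_{2,3}$ in the $K_{3,3}$-case, as required.

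The main obstacle I anticipate is the bookkeeping in this final translation step: one has to verify carefully that the subdivided paths of $H$ lying in $\Gamma^+$ that pass through $v_0$ can indeed be rerouted or deleted so as to yield a bona fide $K_4$- or $K_{2,3}$-subdivision in $\Gamma$, especially when several branch vertices of $H$ are neighbors of $v_0$. This is handled by observing that a subdivision path in $H$ through $v_0$ has length two with $v_0$ as its internal vertex (since $v_0$'s neighbors in $\Gamma^+$ are pairwise nonadjacent to $v_0$ through any other $\Gamma^+$-vertex by construction), so $v_0$ may be treated as a branch vertex without loss of generality, after which the reduction above applies.
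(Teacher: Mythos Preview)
The paper does not prove this theorem at all; it is quoted without proof as a preliminary result from West's textbook, so there is no ``paper's own proof'' to compare against.

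Your approach via the apex construction $\Gamma^{+}=\Gamma+K_{1}$ and Kuratowski's theorem is the standard textbook route and is essentially correct. The one soft spot is the assertion that $v_{0}$ ``must be'' a branch vertex of the Kuratowski subdivision $H$: a priori $v_{0}$ can perfectly well occur as an internal (degree-$2$) vertex of some subdivided path, and neither of your parenthetical justifications actually rules this out. The repair is routine. Since $v_{0}$ is adjacent in $\Gamma^{+}$ to every vertex of $\Gamma$, if $v_{0}$ lies internally on the $ab$-path of $H$ you may replace each branch path from $a$ to another branch vertex $c$ by the single edge $v_{0}c$; this yields a new $K_{5}$- or $K_{3,3}$-subdivision in which $v_{0}$ takes over the role of the branch vertex $a$. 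After that promotion your deletion argument (remove $v_{0}$ and its incident branch paths to obtain a $K_{4}$- or $K_{2,3}$-subdivision inside $\Gamma$) goes through cleanly.
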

\begin{theorem}\label{planar criteria}\cite{westgraph}
A graph $\Gamma$ is planar if and only if it does not contain a subdivision of $K_5$ or $K_{3,3}$.
\end{theorem}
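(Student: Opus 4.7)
The plan is to prove both directions of Kuratowski's characterization, with the necessity flowing quickly from Euler's formula and the sufficiency requiring a substantial structural reduction. For the necessity direction, since subdivision of an edge preserves planarity, it suffices to verify that $K_5$ and $K_{3,3}$ are themselves non-planar. Euler's formula $V - E + F = 2$ for a connected plane graph, combined with the face-edge count $2E \geq 3F$, gives $E \leq 3V - 6$. For $K_5$ this would force $10 \leq 9$, a contradiction. Since $K_{3,3}$ is triangle-free (being bipartite), each face in any plane embedding has length at least $4$, yielding $2E \geq 4F$ and hence $E \leq 2V - 4$; for $K_{3,3}$ this becomes $9 \leq 8$, again impossible. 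Thus any graph containing a subdivision of $K_5$ or $K_{3,3}$ is non-planar.

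For the converse I would argue by contradiction on a minimal counterexample. Let $\Gamma$ be non-planar and contain no subdivision of $K_5$ or $K_{3,3}$, chosen with the minimum number of edges among all such graphs. The first step is to show that $\Gamma$ must be 3-connected. If $\Gamma$ had a cut-vertex, each block would be planar by minimality and could be re-glued at the cut-vertex in the plane. If $\Gamma$ had a 2-cut $\{u,v\}$, then augmenting each side with the virtual edge $uv$ (when absent) produces two smaller graphs, each still free of the forbidden subdivisions and hence planar by minimality; the two embeddings can then be amalgamated along a face containing $uv$. This reduction is routine but requires verifying that introducing the virtual edge does not create a $K_5$ or $K_{3,3}$ subdivision, which uses 3-connectivity failures to locate disjoint paths between $u$ and $v$.

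The main obstacle is handling the 3-connected case itself. Here my approach is to fix an edge $e = uv$ of $\Gamma$, use minimality to produce a planar embedding of $\Gamma - e$, and then analyze the obstructions to reinserting $e$. Concretely, one selects an induced non-separating cycle $C$ through a chosen edge (available via the standard non-separating cycle lemma in 3-connected graphs) and studies the \emph{bridges} of $C$ in $\Gamma$. Any planar embedding must assign these bridges to the two sides of $C$ such that pairwise conflicting bridges land on opposite sides, so planarity is equivalent to the bridge \emph{conflict graph} being bipartite. If it is bipartite, one obtains a planar embedding, contradicting the non-planarity of $\Gamma$. If not, an odd cycle of mutually conflicting bridges exists, and the crux of the proof is to trace paths through these bridges to exhibit a concrete subdivision of $K_5$ or $K_{3,3}$, the exact type depending on the number and interleaving of attachment points on $C$. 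Carrying out this case analysis cleanly---partitioning by whether conflicting pairs share two, three, or four attachment vertices on $C$---is the principal difficulty, and is where the full combinatorial strength of the hypothesis is needed.
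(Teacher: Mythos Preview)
The paper does not prove this statement at all: it is quoted as a classical result with a citation to West's textbook, and no argument is supplied. Your proposal therefore cannot be compared to the paper's own proof, because there is none---the theorem is used as a black box throughout Section~4.

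That said, your outline is a recognizable sketch of the classical Kuratowski argument. The necessity direction via Euler's formula is correct and complete. The sufficiency direction follows the traditional route (minimal counterexample, reduction to $3$-connected, bridge/conflict-graph analysis around a cycle), and you are right that the case analysis extracting an explicit $K_5$ or $K_{3,3}$ subdivision from an odd cycle of conflicting bridges is the real work. One point to tighten: in the $2$-cut reduction you assert that adding the virtual edge $uv$ cannot create a forbidden subdivision, but the justification you give (``uses $3$-connectivity failures to locate disjoint paths'') is vague; the standard argument is that any $K_5$ or $K_{3,3}$ subdivision in an augmented piece, minus the edge $uv$, can be completed through the other piece via a $u$--$v$ path there, yielding a forbidden subdivision in $\Gamma$ itself. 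Also, your description of the $3$-connected step slightly conflates two different proofs (the Thomassen edge-contraction induction and the older delete-an-edge bridge analysis); either works, but they are distinct, and you should commit to one.
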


A compact connected topological space such that each point has a neighbourhood homeomorphic to an open disc is called a surface. For a non-negative integer $g$, let $\mathbb{S}_{g}$ be the orientable surface with $g$ handles. The genus $g(\Gamma)$ of a graph $\Gamma$ is the minimum integer $g$ such that the graph can be embedded in $\mathbb{S}_{g}$, i.e. the graph $\Gamma$ can be drawn into a surface $\mathbb{S}_{g}$ with no edge crossing. Note that the graphs having genus $0$ are planar and the graphs having genus one are toroidal. The following results are useful in the sequel.

\begin{proposition}{\cite[Ringel and Youngs]{white1985graphs}}
\label{genus}
 Let $m, n$ be positive integers. 
 \begin{enumerate}
     \item[{\rm(i)}]If $n \ge 3$, then $g(K_n) = \left\lceil \frac{(n-3)(n-4)}{12} \right\rceil$.
     \item[{\rm(ii)}]If $m, n\geq 2$, then $g(K_{m,n}) =  \left\lceil \frac{(m-2)(n-2)}{4}  \right\rceil$.
 \end{enumerate}
 \end{proposition}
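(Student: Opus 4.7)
The plan is to establish each formula by the classical pairing of a lower bound from Euler's formula with a matching upper bound from an explicit embedding. For any $2$-cell embedding of a connected graph $\Gamma$ in $\mathbb{S}_g$ with $V$ vertices, $E$ edges and $F$ faces, one has $V - E + F = 2 - 2g$, and if every face has boundary walk of length at least $\ell$, then $2E \geq \ell F$. Substituting $F \leq 2E/\ell$ into Euler's formula gives the genus lower bound $g \geq 1 - V/2 + E(\ell-2)/(2\ell)$.

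For $K_n$ with $n \geq 3$: take $\ell = 3$ (any embedding has triangular or larger faces), $V = n$, $E = \binom{n}{2}$. Plugging in yields $g(K_n) \geq (n-3)(n-4)/12$, and since $g$ is an integer we get the ceiling. For $K_{m,n}$ with $m,n \geq 2$: since $K_{m,n}$ is bipartite it contains no odd cycle, so every face in any embedding is bounded by a closed walk of even length $\geq 4$; take $\ell = 4$, $V = m+n$, $E = mn$, obtaining $g(K_{m,n}) \geq (m-2)(n-2)/4$ and hence the ceiling.

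The substantial part is matching the lower bounds with actual embeddings. One would specify a rotation system (a cyclic ordering of the edges around each vertex) and verify, via face tracing, that the resulting embedding has exactly the number of faces needed to realize equality in Euler's formula. The main obstacle here is the complete graph case: Ringel and Youngs's proof of the Map Color Theorem proceeds by splitting into twelve residue classes of $n$ modulo $12$ and exhibiting, for each class separately, a current graph on an appropriate Cayley graph whose derived embedding triangulates $\mathbb{S}_g$ with $g = \lceil (n-3)(n-4)/12 \rceil$. A handful of small or exceptional cases (e.g.\ $n \equiv 5, 6, 9, 11 \pmod{12}$) require ad hoc adjunctions of handles. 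The bipartite case $K_{m,n}$ is comparatively easier: one can give quadrangular embeddings using rotation schemes derived from suitable Latin squares, achieving equality in the Euler bound whenever $m, n \geq 2$.

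Because the statement is the classical Ringel--Youngs theorem, in the present paper the cleanest option is simply to cite \cite{white1985graphs} rather than reproduce the twelve-case construction; the lower bounds via Euler's formula are the portion that one would include if any self-contained justification is required in the sequel.
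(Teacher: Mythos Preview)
The paper does not prove this proposition at all: it is stated as a cited result (the classical Ringel--Youngs theorem, attributed to \cite{white1985graphs}) and used as a black box throughout. Your closing remark that ``the cleanest option is simply to cite \cite{white1985graphs}'' is exactly what the paper does, so your proposal and the paper agree; the Euler-formula lower bound sketch you provide is correct and standard, but goes beyond anything the paper includes.
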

 
 \begin{lemma}{\cite[Theorem 5.14]{white1985graphs}}
 \label{eulerformulagenus}
 Let $\Gamma$ be a connected graph with a 2-cell embedding in $\mathbb{S}_{g}$. Then $v - e + f = 2 - 2g$, where $v, e$ and $f$ are the number of vertices, edges and faces embedded in $\mathbb{S}_{g}$, respectively and $g$ is the genus of the graph $\Gamma$. 
\end{lemma}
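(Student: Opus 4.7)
The plan is to recognise the stated equality as the defining computation of the Euler characteristic of $\mathbb{S}_g$ using the CW-structure induced by the embedding. Because the embedding is $2$-cell, every face is homeomorphic to an open disc, so the $v$ vertices, $e$ edges, and $f$ faces form the $0$-, $1$-, and $2$-cells of a genuine finite CW-decomposition of $\mathbb{S}_g$. Hence $v - e + f$ is by definition the combinatorial Euler characteristic of this decomposition, and the task reduces to two independent assertions: (a) this alternating sum depends only on the underlying surface and not on the chosen CW-structure, and (b) for the orientable surface of genus $g$ its value is $2-2g$.

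For (a) I would identify the combinatorial alternating sum with the topological invariant $\sum_i (-1)^i \operatorname{rank} H_i(\mathbb{S}_g;\mathbb{Z})$ via the cellular chain complex; any two CW-decompositions of the same compact surface give the same alternating sum. A more elementary route, avoiding homology, is to observe that $v - e + f$ is invariant under the two local moves that relate any two $2$-cell embeddings, namely subdividing an edge (which adds one vertex and one edge) and inserting a diagonal inside a face (which adds one edge and one face), together with their inverses.

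For (b) I would just compute the invariant on one convenient model. The tetrahedral triangulation of $\mathbb{S}_0$ yields $4 - 6 + 4 = 2$, matching $2-2\cdot 0$. For $g\geq 1$, the standard polygonal presentation of $\mathbb{S}_g$ as a $4g$-gon with boundary word $a_1b_1a_1^{-1}b_1^{-1}\cdots a_gb_ga_g^{-1}b_g^{-1}$ descends, after the edge identifications, to a CW-structure with a single vertex, $2g$ edges, and a single $2$-cell, giving $v - e + f = 1 - 2g + 1 = 2 - 2g$ as required.

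The main obstacle is the invariance statement (a); once it is accepted, part (b) is a one-line bookkeeping calculation on the canonical polygon model plus the classification of compact orientable surfaces. Since the result is classical and already cited to \cite{white1985graphs}, in the write-up I would simply refer to this reference rather than reproduce the topological argument.
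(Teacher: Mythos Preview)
Your sketch is a correct outline of the classical topological proof of Euler's formula for $2$-cell embeddings. However, the paper does not supply its own proof of this lemma at all: it is simply quoted with a citation to \cite[Theorem~5.14]{white1985graphs} and then used as a tool in later arguments. So there is nothing to compare against beyond the bare reference, and your final remark---that in the write-up you would just cite \cite{white1985graphs}---is exactly what the authors do. Your proposal therefore goes beyond the paper's treatment; for the purposes of this manuscript, the citation alone suffices.
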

\begin{lemma}\cite{white2001graphs}
\label{genusofblocks}
The genus of a connected graph $\Gamma$ is the sum of the genera of its blocks.
\end{lemma}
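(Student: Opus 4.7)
The plan is to prove the two inequalities $g(\Gamma) \le \sum_i g(B_i)$ and $g(\Gamma) \ge \sum_i g(B_i)$ separately, working by induction on the number of blocks of $\Gamma$. The base case of a single block is trivial. For the inductive step I would pick a cut vertex $v$ of $\Gamma$ and decompose $\Gamma = \Gamma_1 \cup \Gamma_2$, where $\Gamma_1$ and $\Gamma_2$ are connected subgraphs with $\Gamma_1 \cap \Gamma_2 = \{v\}$, each being a union of blocks. It then suffices to prove $g(\Gamma) = g(\Gamma_1) + g(\Gamma_2)$, and iterating over cut vertices yields the lemma.

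For the upper bound, I would take optimal embeddings of $\Gamma_1$ in $\mathbb{S}_{g(\Gamma_1)}$ and of $\Gamma_2$ in $\mathbb{S}_{g(\Gamma_2)}$. In each embedding, pick a face incident with $v$, remove a small open disc from that face (with $v$ on its boundary in both), and identify the two boundary circles via a homeomorphism carrying $v$ to $v$. The result is the connected sum of the two surfaces, and so has genus exactly $g(\Gamma_1) + g(\Gamma_2)$; it also carries a well-defined embedding of $\Gamma_1 \cup \Gamma_2 = \Gamma$. Hence $g(\Gamma) \le g(\Gamma_1) + g(\Gamma_2)$.

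For the lower bound, consider any $2$-cell embedding of $\Gamma$ in $\mathbb{S}_g$ with $g = g(\Gamma)$. The local data at $v$ is a cyclic rotation of the edges incident with $v$. If the $\Gamma_1$-edges and the $\Gamma_2$-edges form two contiguous arcs in this rotation, then a small simple closed curve encircling $v$ on the surface meets $\Gamma$ only at $v$ and separates the embedding into two pieces; capping each piece with a disc gives closed surfaces that embed $\Gamma_1$ and $\Gamma_2$ respectively. Their genera sum to $g$ by the additivity of genus under connected sum, so $g \ge g(\Gamma_1) + g(\Gamma_2)$, which combined with the upper bound closes the induction.

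The main obstacle, and the technical heart of the argument, is that in a general optimal embedding the $\Gamma_1$- and $\Gamma_2$-edges around $v$ need not be contiguous: they may be interleaved in the rotation. To handle this I would invoke the amalgamation principle of Battle--Harary--Kodama--Youngs, which asserts that any embedding of $\Gamma$ can be modified by a finite sequence of local surgeries at $v$ (permuting blocks in the rotation around the cut vertex) without increasing the genus, until the edges are grouped by block. Making this rigorous is where the real work lies; I would phrase the surgeries combinatorially in terms of rotation systems, and use Lemma \ref{eulerformulagenus} applied to each side of the separating curve to track how vertex, edge, and face counts change, ensuring the total Euler characteristic is preserved or improved at each step.
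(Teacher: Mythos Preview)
The paper does not supply a proof of this lemma at all: it is quoted as a known result from White's monograph \cite{white2001graphs} (originally due to Battle, Harary, Kodama, and Youngs), and is used as a black box in the genus and crosscap computations of Section~4. So there is no ``paper's own proof'' to compare your attempt against.

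Your outline is the standard BHKY argument and is essentially sound. The upper bound via connected sum at a face incident to the cut vertex is correct as stated. For the lower bound, two remarks. First, your separating curve does not ``encircle'' $v$ but passes through it; cutting along it produces two surfaces-with-boundary on each of which $v$ appears on the boundary circle, and capping off then gives embeddings of $\Gamma_1$ and $\Gamma_2$ separately---this is fine, just slightly misdescribed. Second, and more substantively, you write that you would ``invoke the amalgamation principle of Battle--Harary--Kodama--Youngs'' to handle the interleaved case. That principle \emph{is} the content of the theorem you are proving, so citing it is circular; what you must actually do is perform the surgery explicitly. The cleanest version: if in the rotation at $v$ the edges alternate $\ldots e_1 f_1 e_2 f_2 \ldots$ with $e_i\in\Gamma_1$, $f_i\in\Gamma_2$, one splits $v$ into two copies $v',v''$ joined by a new edge, routing the $\Gamma_1$-edges to $v'$ and the $\Gamma_2$-edges to $v''$. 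One checks via Euler's formula (your Lemma~\ref{eulerformulagenus}) that this does not raise the genus, and then contracts the new edge. Iterating un-interleaves the rotation. You gesture at this in your last paragraph, but since it is the entire difficulty, a proof that stops short of carrying it out is only a plan, not a proof.
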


Let $\mathbb{N}_{k}$ denote the non-orientable surface formed by the connected sum of $k$ projective planes, that is, $\mathbb{N}_{k}$ is a non-orientable surface with $k$ crosscap. The \emph{crosscap} of a graph $\Gamma$, denoted by $cr(\Gamma)$, is the minimum non-negative integer $k$ such that $\Gamma$ can be embedded in $\mathbb{N}_{k}$. For instance, a graph $\Gamma$ is planar if $cr(\Gamma) = 0$ and the  $\Gamma$ is projective if $cr(\Gamma) = 1$. The following results are useful to obtain the crosscap of a graph.
\begin{proposition}\label{crosscap}{\cite[Ringel and Youngs]{mohar2001graphs}}
 Let $m, n$ be positive integers. Then\\
$ {\rm(i)}~~ cr(K_n) =
 \begin{cases} 
      \left\lceil \frac{(n-3)(n-4)}{6} \right\rceil&  \textit{if}~~ n\geq 3 \\
    3 & \textit{if}~~ n =7 
   \end{cases}$\\
${\rm(ii)}~~ cr(K_{m,n}) =  \left\lceil \frac{(m-2)(n-2)}{2}  \right\rceil~~ \textit{if}~~ m, n\geq 2$
\end{proposition}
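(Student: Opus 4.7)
The plan is to prove both parts by first establishing lower bounds via an Euler characteristic argument and then exhibiting explicit embeddings that attain them; essentially all of the substantive work lies in the second step.

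For the lower bounds I would invoke the non-orientable analogue of Lemma \ref{eulerformulagenus}: if $\Gamma$ admits a 2-cell embedding in $\mathbb{N}_k$, then $v - e + f = 2 - k$. Every face of a simple-graph embedding is bounded by at least three edges and every edge borders exactly two faces, so $2e \geq 3f$. Substituting $v = n$ and $e = \binom{n}{2}$ and rearranging gives $k \geq 2 - n + e/3 = (n-3)(n-4)/6$, so $cr(K_n) \geq \lceil (n-3)(n-4)/6 \rceil$. For the bipartite case one exploits girth at least four to obtain $2e \geq 4f$, and the analogous manipulation with $v = m+n$, $e = mn$ yields $cr(K_{m,n}) \geq \lceil (m-2)(n-2)/2 \rceil$.

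For the matching upper bounds, the task is to construct, for each admissible $n$ (respectively $(m,n)$), a 2-cell embedding of $K_n$ (respectively $K_{m,n}$) on the surface $\mathbb{N}_k$ with $k$ equal to the floor bound just derived. The standard modern approach is via current graphs (or voltage graphs): one produces a small base graph labelled by elements of a finite group whose derived covering is $K_n$ or $K_{m,n}$, and whose rotation system and edge signatures realise a triangular (respectively quadrilateral) embedding on the desired non-orientable surface. The construction splits by the residue of $n$ modulo $12$ in the complete case and by residues of $(m,n)$ modulo $4$ in the bipartite case; each class demands its own combinatorial design, occasionally supplemented by additional handles when the naive scheme fails to close.

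The genuine obstacle, and the reason this result (the non-orientable Heawood conjecture) took years to complete, is the exceptional value $n=7$. Here the Euler inequality only forces $k \geq 2$, yet no triangular embedding of $K_7$ in the Klein bottle $\mathbb{N}_2$ actually exists; ruling this out requires a finer counting argument on rotation systems, showing that a triangulation of $\mathbb{N}_2$ with seven vertices would force a local incidence structure incompatible with $K_7$. The correct value $cr(K_7)=3$ then follows from producing an explicit embedding on $\mathbb{N}_3$. Dispatching this single exception while handling every other residue class uniformly is precisely the point at which the proof becomes substantially harder than its orientable counterpart stated in Proposition \ref{genus}.
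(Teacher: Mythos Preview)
The paper does not prove this proposition at all: it is quoted in the preliminaries as a classical result of Ringel and Youngs, cited from \cite{mohar2001graphs}, and used as a black box throughout. There is therefore nothing to compare your argument against.

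Your outline is a fair high-level sketch of how the actual Ringel--Youngs programme runs (Euler-formula lower bound, current/voltage-graph constructions for the upper bound, case split by residue classes, and the Klein-bottle obstruction for $K_7$), but it is only a sketch: the substantive combinatorial constructions you allude to occupy an entire monograph and are not something one reproduces inside a paper that merely quotes the formula. For the purposes of this paper, the correct ``proof'' is simply the citation.
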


\begin{lemma}\label{eulerformulacrosscap}{\cite[Lemma 3.1.4]{mohar2001graphs}}
Let $\phi : \Gamma \rightarrow \mathbb{N}_{k}$ be a $2$-cell embedding of a connected graph $\Gamma$ to the non-orientable surface $\mathbb{N}_{k}$. Then $v - e + f = 2 - k$, where $v, e$ and $f$ are the number of vertices, edges and faces of $\phi(\Gamma)$ respectively, and $k$ is the crosscap of $\mathbb{N}_{k}$.
\end{lemma}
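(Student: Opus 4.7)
The plan is to derive the Euler formula $v-e+f = 2-k$ from the topological invariance of the Euler characteristic. First I would observe that a $2$-cell embedding $\phi : \Gamma \to \mathbb{N}_k$ of a connected graph endows the surface $\mathbb{N}_k$ with the structure of a finite CW complex whose $0$-, $1$-, and $2$-cells are exactly the vertices, edges, and faces of the embedding; the $2$-cell hypothesis is precisely the requirement that each face be homeomorphic to an open disc, and hence qualify as a topological $2$-cell. A standard theorem of algebraic topology then identifies the alternating sum $v-e+f$ with the Euler characteristic $\chi(\mathbb{N}_k)$, independently of the particular cellular decomposition.

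It then suffices to compute $\chi(\mathbb{N}_k) = 2-k$. I would first record $\chi(\mathbb{RP}^2) = 1$ from the standard CW decomposition of the projective plane having one cell in each dimension $0,1,2$. Next I would invoke the connected sum formula $\chi(M\#N) = \chi(M)+\chi(N)-2$, whose justification is a brief inclusion–exclusion: $M\#N$ is obtained from $(M\setminus D_1)\cup (N\setminus D_2)$ by gluing along a boundary circle of Euler characteristic $0$, and removing an open disc from a closed surface decreases $\chi$ by exactly $1$. Since $\mathbb{N}_k$ is by definition the connected sum of $k$ copies of $\mathbb{RP}^2$, a one-line induction on $k$ starting from $\chi(\mathbb{N}_1)=1$ (with $\chi(\mathbb{N}_0) = \chi(S^2) = 2$ absorbing the planar base case) delivers $\chi(\mathbb{N}_k)=2-k$.

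The main technical obstacle is the invariance of $v-e+f$ across different CW decompositions of the same surface. The cleanest justification is homological, via $\chi(X) = \sum_i (-1)^i \dim H_i(X;\mathbb{Q})$. If one prefers an elementary combinatorial route, more consonant with the graph-theoretic flavour of the paper, one fixes one explicit $2$-cell embedding in $\mathbb{N}_k$ (for instance the standard polygonal presentation of the non-orientable genus $k$ surface as a $2k$-gon with sides identified in pairs) for which the count $v-e+f=2-k$ can be checked by hand, and then shows that an arbitrary $2$-cell embedding can be transformed into this reference one by a sequence of local moves — edge subdivisions and their inverses, and edge additions/deletions that split a face into two or merge two faces into one — each of which preserves $v-e+f$. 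Either route matches the combinatorial count with $\chi(\mathbb{N}_k)=2-k$, completing the proof.
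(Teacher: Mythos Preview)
Your proposal is a correct and standard derivation of the non-orientable Euler formula: realise the $2$-cell embedding as a CW structure on $\mathbb{N}_k$, identify $v-e+f$ with the Euler characteristic, and then compute $\chi(\mathbb{N}_k)=2-k$ via $\chi(\mathbb{RP}^2)=1$ together with the connected-sum identity $\chi(M\#N)=\chi(M)+\chi(N)-2$. The two routes you sketch for the invariance step (homological versus elementary moves) are both valid.

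However, there is nothing in the paper to compare your argument against: the paper does not prove this lemma at all. It is quoted verbatim as \cite[Lemma~3.1.4]{mohar2001graphs} and used as a black box throughout Section~4. So your write-up goes well beyond what the paper does; any comparison of ``approaches'' is vacuous here, since the paper's approach is simply to cite the result.
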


\begin{definition}\cite{white2001graphs}
 A graph $\Gamma$ is orientably simple if $\mu(\Gamma) \neq 2-cr(\Gamma)$, where $\mu(\Gamma) = \rm{max} \{2- 2$$g$$ (\Gamma), 2-cr(\Gamma)\}$.

\end{definition}

\begin{lemma}\cite{white2001graphs}\label{crosscapofblocks}
Let $\Gamma$ be a graph with blocks $\Gamma_1, \Gamma_2, \cdots, \Gamma_k$. Then 
\begin{center}
$cr(\Gamma) =$
$\begin{cases} 
1-k+ \sum \limits_{i=1}^{k} cr(\Gamma_i),  & ~~\textit{if}~~ \Gamma~~ \textit{is orientably simple}\\
2k - \sum \limits_{i=1}^{k} \mu (\Gamma_i), & ~~\textit{otherwise.}
\end{cases}$
\end{center}
\end{lemma}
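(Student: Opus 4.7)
My plan is to reduce the statement to the block-additivity of the \emph{Euler genus} $\epsilon(\Gamma) := 2 - \mu(\Gamma)$, which from the definition of $\mu$ equals $\min\{2g(\Gamma), cr(\Gamma)\}$. The main identity I would establish is
\[
\epsilon(\Gamma) \;=\; \sum_{i=1}^{k} \epsilon(\Gamma_i), \qquad \text{i.e.,} \qquad \mu(\Gamma) \;=\; 2 - 2k + \sum_{i=1}^{k} \mu(\Gamma_i).
\]
This is the Euler-genus analogue of the additivity of orientable genus over blocks (Lemma \ref{genusofblocks}), and from it both cases of the crosscap formula will follow by a short bookkeeping.

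I would prove the additivity in two steps. For the inequality $\epsilon(\Gamma) \le \sum \epsilon(\Gamma_i)$, take for each block a minimum-Euler-genus embedding into its surface $\Sigma_i$ (orientable or non-orientable, whichever gives the smaller Euler genus) and amalgamate them along cut vertices: at each cut vertex, choose an incident face in each of the two current surfaces, cut a disk out of each, and glue along the boundary. This connected sum realizes $\Gamma$ restricted to the union of the two sides, and a standard Euler characteristic count (the two deleted faces merge into one face while the surfaces combine by connected sum) shows the amalgamated embedding realizes $\Gamma$ with Euler genus $\sum \epsilon(\Gamma_i)$. The reverse inequality $\epsilon(\Gamma) \ge \sum \epsilon(\Gamma_i)$ is the delicate half: starting from an optimal embedding of $\Gamma$ and inducting on $k$, one uses a topological surgery at a cut vertex---pinching the surface along a small loop separating the two sides and capping off with disks---to split it into embeddings of the two pieces whose Euler genera sum to that of $\Gamma$.

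With block-additivity of $\mu$ established, the two cases separate cleanly. If $\Gamma$ is not orientably simple, then $\mu(\Gamma) = 2 - cr(\Gamma)$ by definition, and substitution into the additivity identity gives $cr(\Gamma) = 2k - \sum \mu(\Gamma_i)$, i.e., case (ii). If $\Gamma$ is orientably simple, then $cr(\Gamma) = 2g(\Gamma) + 1$ and $\mu(\Gamma) = 2 - 2g(\Gamma)$; one then checks that every block $\Gamma_i$ is necessarily orientably simple too, for otherwise the amalgamation construction would produce a non-orientable embedding of $\Gamma$ in a surface of crosscap at most $2g(\Gamma)$, contradicting $cr(\Gamma) = 2g(\Gamma)+1$. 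Consequently $\mu(\Gamma_i) = 2 - 2g(\Gamma_i) = 3 - cr(\Gamma_i)$ for each $i$, and substituting into the additivity identity and simplifying yields $cr(\Gamma) = 1 - k + \sum cr(\Gamma_i)$, i.e., case (i).

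The hardest step will be the $\ge$ direction of Euler-genus additivity: given an optimal embedding of $\Gamma$, separating it along a cut vertex while tracking Euler characteristic requires care because the cut vertex need not lie on a single face, and the two halves of the split surface may individually be orientable or non-orientable in a way that does not obviously match the original. Handling this topological bookkeeping is the technical core; once it is done, the crosscap formula in both branches is essentially a substitution and a rearrangement.
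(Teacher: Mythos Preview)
The paper does not prove this lemma at all: it is quoted as a preliminary result from \cite{white2001graphs}, with no argument supplied. So there is no ``paper's own proof'' to compare against; your proposal is an attempt to prove a result the authors simply import.

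That said, your outline is sound and follows the classical route (essentially Stahl--Beineke as presented in White). The reduction to block-additivity of the Euler genus $\epsilon(\Gamma)=2-\mu(\Gamma)=\min\{2g(\Gamma),cr(\Gamma)\}$ is correct, and your bookkeeping in both cases checks out: in the orientably simple case you implicitly use the standard inequality $cr(\Gamma)\le 2g(\Gamma)+1$ (add a crosscap to an optimal orientable embedding) to force $cr(\Gamma)=2g(\Gamma)+1$, and your argument that orientable simplicity of $\Gamma$ descends to each block, via an amalgamation producing a too-cheap non-orientable embedding otherwise, is the right one. The one place to be careful is exactly where you flag it: in the $\ge$ half of Euler-genus additivity, a cut vertex in an arbitrary optimal embedding need not have its incident blocks consecutively arranged in the local rotation, so a naive ``pinch along a small separating loop'' may not exist. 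The standard fix is to first modify the embedding locally at the cut vertex (re-splicing the rotation so that each block's edges are consecutive, which never raises Euler genus) and only then perform the surgery; you should make that step explicit rather than leave it as ``topological bookkeeping''.
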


We use the following remark frequently in this paper.
\begin{remark}\label{triangularface}
For a simple graph $\Gamma$, we have $2e \geq 3f$. 
\end{remark}

  A ring $R$ is called \emph{local} if it has a unique maximal ideal $\mathcal{M}$ and it is abbreviated by $(R, \mathcal{M})$. For an ideal $I$ of $R$, the smallest positive integer $n$ such that $I^n = 0$ is called the \emph{nilpotency index} $\eta(I)$ of the ideal $I$. Let $R$ be a non-local commutative Artinian ring. 
 By the structural theorem (see \cite{atiyah1994introduction}),  $R$ is uniquely (up to isomorphism) a finite direct product of local rings $R_i$ that is $R \cong R_1 \times R_2 \times \cdots \times R_n$, where $n \geq 2$. Note that for any commutative Artinian ring, every prime ideal is a maximal ideal (see {\cite[Proposition 8.1]{atiyah1994introduction}}). Hence, we use the maximal ideal for the adjacency between the two vertices of the prime ideal sum graph. The following remark specifies maximal ideals in a non-local Artinian commutative ring, where $\textnormal{Max}(R)$ denotes the set of all maximal ideals of $R$. The set of non-zero proper ideals of $R$ is denoted by $\mathcal{I}^*(R)$.  Throughout the paper, $F_i$ denotes a  field. For other basic definitions of ring theory, we refer the reader to \cite{atiyah1994introduction}.

 \begin{remark}
Let $R \cong R_1 \times R_2 \times \cdots \times R_n$ $(n \geq 2)$ be a non-local commutative ring, where each $R_i$ is a local ring with maximal ideal $\mathcal{M}_i$. Then $\textnormal{Max}(R) = \{ J_1, J_2, \ldots, J_n\}$, where $J_i =  R_1 \times R_2 \times \cdots \times R_{i-1} \times \mathcal{M}_i \times R_{i+1} \times \cdots \times  R_n$.
\end{remark}
 

\section{Embedding of $\text{PIS}(R)$ on surfaces}
In this section, we study the embedding of the prime ideal sum graph $\text{PIS}(R)$ on a surface without edge crossing. We begin with the investigation of an embedding of $\text{PIS}(R)$ on a plane.

\subsection{Planarity of $\text{PIS}(R)$}
In this subsection, we classify all the non-local commutative Artinian rings with unity for which the graph $\text{PIS}(R)$ is planar and outerplanar, respectively.

\begin{theorem}\label{Planar_primeidealsum}
Let $R \cong R_1 \times R_2 \times \cdots \times R_n$ $(n \geq 2)$ be a non-local commutative ring, where each $R_i$ is a local ring with maximal ideal $\mathcal{M}_i$ and let $F_1$, $F_2$, $F_3$ be fields. Then the graph $\textnormal{PIS}(R)$ is planar if and only if one of the following holds:
 \begin{enumerate}[\rm(i)]
    \item $R \cong  F_1 \times F_2 \times F_3$. 
    \item $R \cong  F_1 \times F_2$.
    \item $R \cong  R_1 \times R_2$ such that both $R_1$ and $R_2$ are principal ideal rings with $\eta(\mathcal{M}_1) = \eta(\mathcal{M}_2) =2$.
    \item $R \cong  F_1 \times R_2$, where the local ring $R_2$ is principal ideal ring.
    
    \end{enumerate}
\end{theorem}

\begin{proof}
First suppose that the graph $\text{PIS}(R)$ is planar and $R \cong R_1 \times R_2 \times \cdots \times R_n$, where $n \ge 5$. Then the set $X = \{\mathcal{M}_1 \times R_2 \times \cdots \times R_n, \ \mathcal{M}_1 \times \langle 0 \rangle \times R_3 \times \cdots \times R_n, \ \mathcal{M}_1 \times R_2 \times \langle 0 \rangle \times R_4 \times \cdots \times R_n, \ \mathcal{M}_1 \times R_2 \times R_3 \times \langle 0 \rangle \times R_5 \times \cdots \times R_n, \ \mathcal{M}_1 \times R_2 \times R_3 \times R_4 \times \langle 0 \rangle \times R_6 \times \cdots \times R_n\}$ induces a subgraph isomorphic to $K_5$, a contradiction. Therefore, $n \in \{2, 3, 4\}$. If $R \cong R_1 \times R_2 \times R_3 \times R_4$, then by Figure \ref{subdivision of K33 fig1} and Theorem \ref{planar criteria}, the graph $\text{PIS}(R)$ is not planar, a contradiction. Consequently, $n \in \{2, 3\}$. 
\begin{figure}[h!]
\centering
\includegraphics[width=0.6 \textwidth]{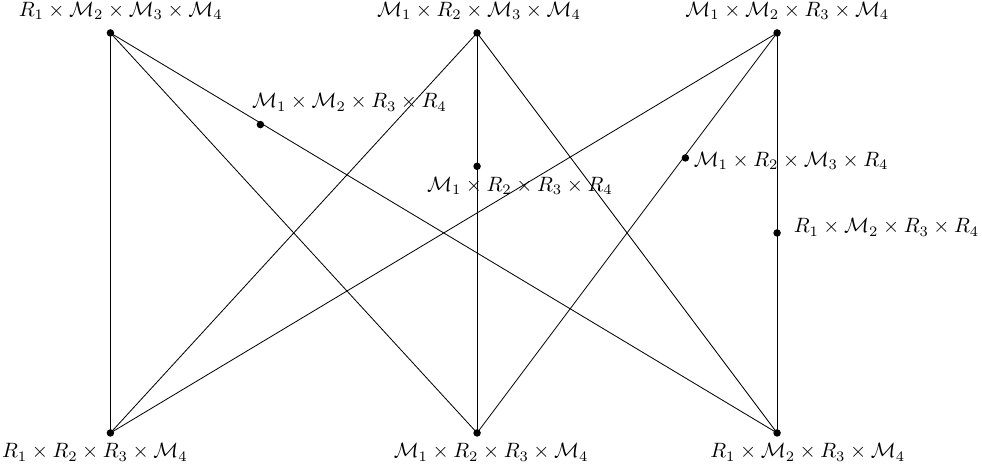}
\caption{A subgraph of $\text{PIS}(R_1 \times R_2 \times R_3 \times R_4)$ homeomorphic to $K_{3,3}$}
\label{subdivision of K33 fig1}
\end{figure}

 Now assume that $R\cong R_1 \times R_2 \times R_3$ such that one of $R_i ( 1 \le i \le 3)$ is not a field. Without loss of generality, we assume that $R_1$ is not a field. By Figure \ref{subdivision of K5 fig1}, note that $\text{PIS}(R)$ contains a subgraph homeomorphic to $K_5$, which is not possible. Thus, $R \cong F_1\times F_2 \times F_3$.
\begin{figure}[h!]
\centering
\includegraphics[width=0.4 \textwidth]{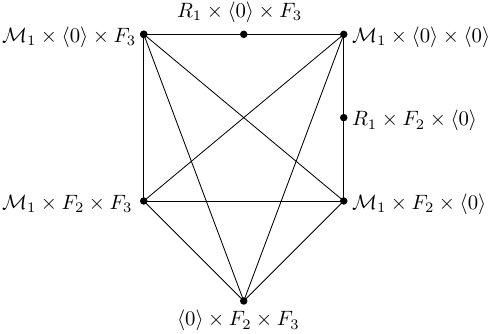}
\caption{A subgraph of $\text{PIS}(R_1 \times F_2 \times F_3)$ homeomorphic to $K_{5}$}
\label{subdivision of K5 fig1}
\end{figure}
We may now suppose that $R \cong R_1 \times R_2$. First note that both $R_1$ and $R_2$ are principal rings. On contrary, if one of them is not principal say $R_1$. Then $R_1$ has atleast two nontrivial ideals $J_1$ and $J_2$ diffrent from $\mathcal{M}_1$ such that $J_1 + J_2 = \mathcal{M}_1$. By Figure \ref{subdivision of K33 fig2}, note that $\text{PIS}(R_1 \times R_2)$ contains a subgraph homeomorphic to $K_{3,3}$, which is not possible.
\begin{figure}[h!]
\centering
\includegraphics[width=0.4 \textwidth]{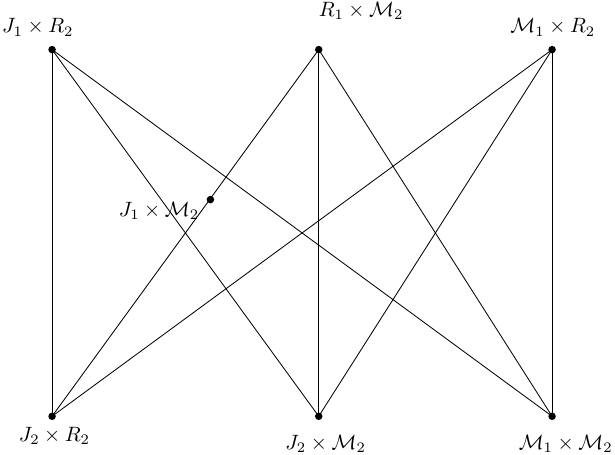}
\caption{Subgraph of $\text{PIS}(R_1 \times R_2)$, where $R_1$ is not a principal ring}
\label{subdivision of K33 fig2}
\end{figure}
Consequently, $R \cong R_1 \times R_2$ such that both the rings $R_1$ and $R_2$ are principal. If at least one of $R_i$ $(1 \le i \le 2)$ is a field, then $R$ is isomorphic to $F_1 \times F_2$ or $F_1 \times R_2$, where $R_2$ is a principal ideal ring. Now let $R \cong R_1 \times R_2$ such that both $R_1$ and $R_2$ are principal ideal rings which are not fields. If $ \eta(\mathcal{M}_1) \ge 3$ and $ \eta(\mathcal{M}_2) \ge 2$, then there exists a non-zero proper ideal $\mathcal{M}_1^2(\neq \mathcal{M}_1)$ of $R_1$. By Figure \ref{subdivision of K33 fig3}, the graph $\text{PIS}(R_1 \times R_2)$ contains a subgraph homeomorphic to $K_{3,3}$, a contradiction. Consequently, for $R \cong R_1 \times R_2$, where both $R_1$ and $R_2$ are not fields, we have $\eta(\mathcal{M}_1) = \eta(\mathcal{M}_2) =2$.
\begin{figure}[h!]
\centering
\includegraphics[width=0.4 \textwidth]{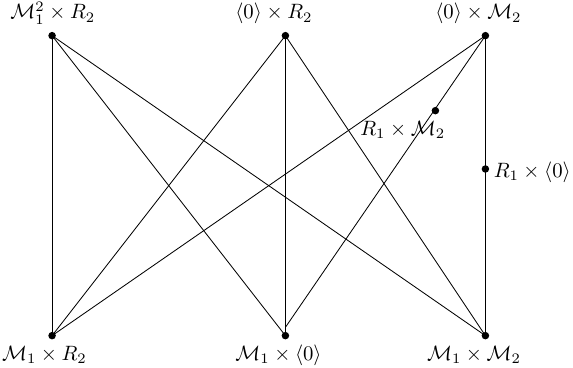}
\caption{Subgraph of $\text{PIS}(R_1 \times R_2)$, where $ \eta(\mathcal{M}_1) \ge 3$ and $ \eta(\mathcal{M}_1) \ge 2$}
\label{subdivision of K33 fig3}
\end{figure}

 The converse follows from Figures \ref{outerplanar_fig2}, \ref{outerplanar_fig3}, \ref{planar drawing fig1} and \ref{planar drawing fig2}.
 \begin{figure}[h!]
\centering
\includegraphics[width=0.4 \textwidth]{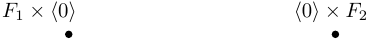}
\caption{The prime ideal sum graph of $F_1 \times F_2$}
\label{outerplanar_fig2}
\end{figure}
 \begin{figure}[h!]
\centering
\includegraphics[width=0.4 \textwidth]{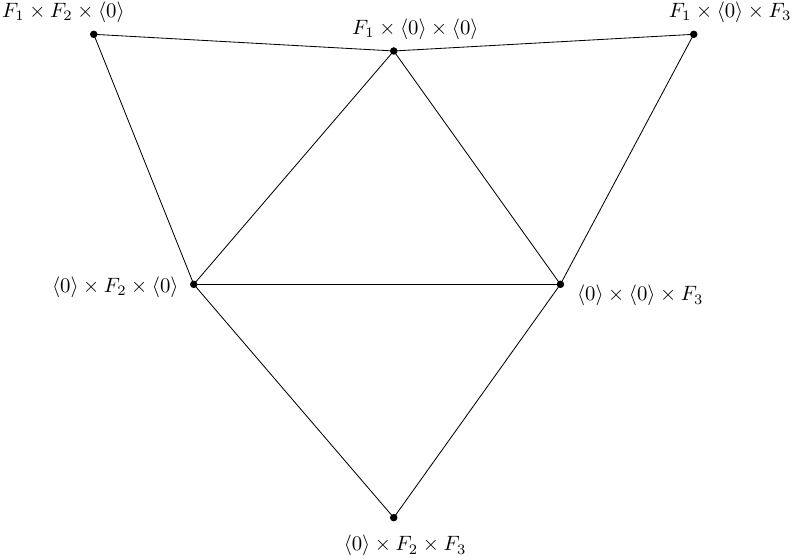}
\caption{ The graph $\text{PIS}(F_1 \times F_2 \times F_3)$}
\label{outerplanar_fig3}
\end{figure}
 \begin{figure}[h!]
\centering
\includegraphics[width=0.6 \textwidth]{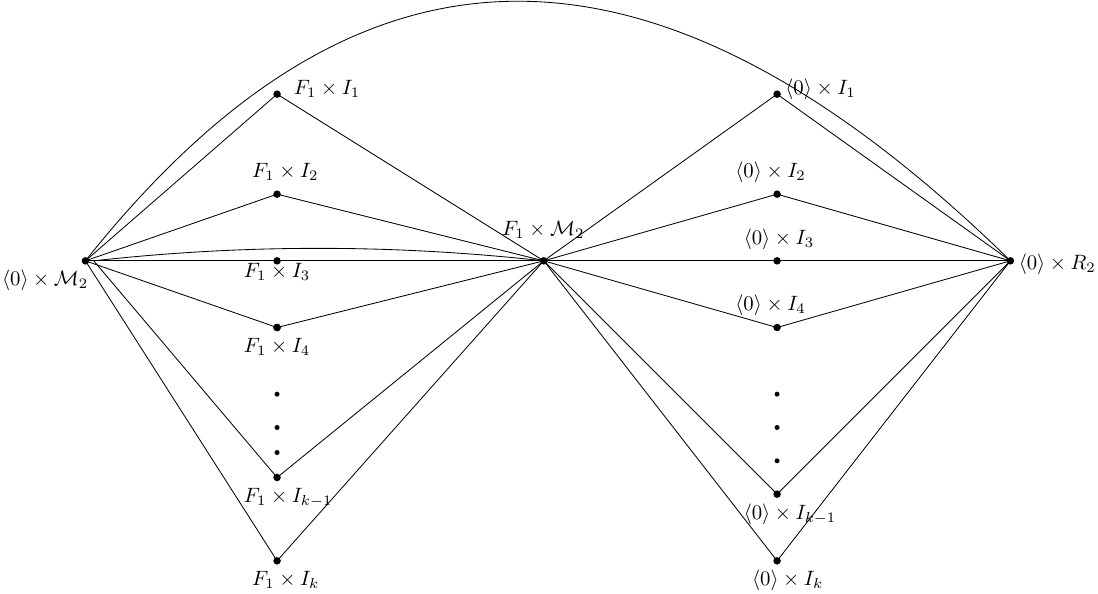}
\caption{ The graph $\text{PIS}(F_1 \times R_2)$, where $R_2$ is a principal ideal ring}
\label{planar drawing fig1}
\end{figure}
\begin{figure}[h!]
\centering
\includegraphics[width=0.4 \textwidth]{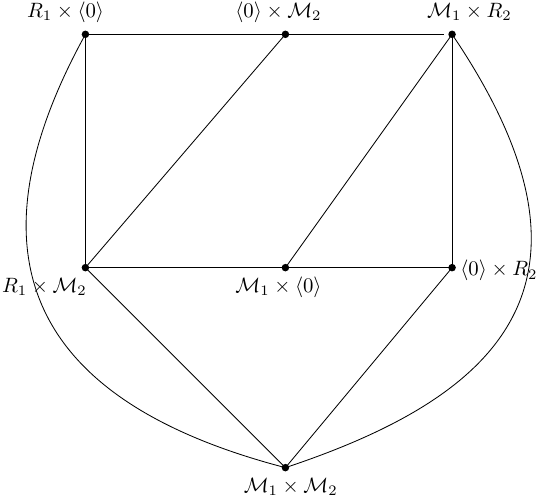}
\caption{The graph $\text{PIS}(R_1 \times R_2)$, where $\eta(\mathcal{M}_1) = \eta(\mathcal{M}_2) =2$}
\label{planar drawing fig2}
\end{figure}
\end{proof}

\begin{theorem}\label{outerplanar_primeidealsum}
Let $R \cong R_1 \times R_2 \times \cdots \times R_n$ $(n \geq 2)$ be a non-local commutative ring, where each $R_i$ is a local ring with maximal ideal $\mathcal{M}_i$. Then the graph $\textnormal{PIS}(R)$ is outerplanar if and only if $R$ is isomorphic to one of the following rings: $F_1 \times F_2 \times F_3$, $F_1 \times F_2$, $F_1 \times R_2$, where $F_1$, $F_2$, $F_3$ are fields and $R_2$ is a principal ideal ring with $\eta(\mathcal{M}_2) = 2$.
\end{theorem}

\begin{proof}
Suppose that $\text{PIS}(R)$ is an outerplanar graph. Since every outerplanar graph is planar, we get $R$ is isomorphic to the following rings: $F_1 \times F_2 \times F_3$, $F_1 \times F_2$, $F_1 \times R_2'$, $R_1 \times R_2$, where $R_2'$ is any principal ideal ring and $R_1$, $R_2$ are principal ideal rings with $\eta(\mathcal{M}_1) = \eta(\mathcal{M}_2) = 2$.

Let $R \cong R_1 \times R_2$, where $R_1$ and $R_2$ are principal rings. First suppose that both $R_1$ and $R_2$ are not fields together with $\mathcal{I}^*(R_1) = \{ \mathcal{M}_1 \}$ and $\mathcal{I}^*(R_2) = \{ \mathcal{M}_2 \}$. Consider $J_1 = \mathcal{M}_1 \times R_2 $, $J_2 = R_1 \times \mathcal{M}_2$, $J_3 = \langle 0 \rangle \times R_2$, $J_1' = \mathcal{M}_1 \times \mathcal{M}_2$, $J_2' = \mathcal{M}_1 \times \langle 0 \rangle$. Notice that the subgraph induced by the set $\{J_1, J_2, J_3, J_1', J_2' \}$ of vertices of $\text{PIS}(R)$ contains $K_{3,2}$ as a subgraph, a contradiction. Without loss of generality, we may now assume that $R_1$ is a field. Further, let $R_2$ is a principal ideal ring with $\eta(\mathcal{M}_2) \ge 3$. By Figure \ref{notouterplanar_fig1}, the graph $\text{PIS}(F_1 \times R_2)$ contains a subgraph homeomorphic to $K_{2,3}$, a contradiction. Therefore, either $R \cong F_1 \times F_2$ or $R \cong F_1 \times R_2$, where $R_2$ is a principal ideal ring with $\eta(\mathcal{M}_2) = 2$.
\begin{figure}[h!]
\centering
\includegraphics[width=0.3 \textwidth]{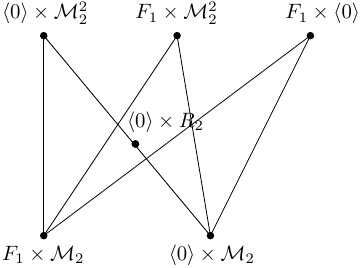}
\caption{Subgraph of $\text{PIS}(F_1 \times R_2)$, where $\eta(\mathcal{M}_2) \ge 3$}
\label{notouterplanar_fig1}
\end{figure}
The converse follows from Figures \ref{outerplanar_fig2}, \ref{outerplanar_fig3} and \ref{outerplanar_fig4}.
\begin{figure}[h!]
\centering
\includegraphics[width=0.3 \textwidth]{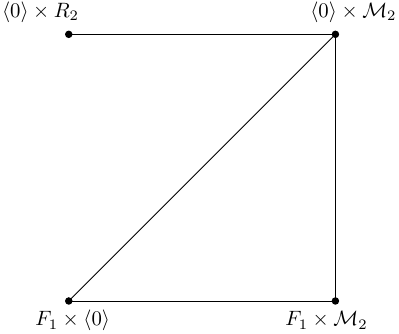}
\caption{The graph $\text{PIS}(F_1 \times R_2)$, where $\eta(\mathcal{M}_2) = 2$}
\label{outerplanar_fig4}
\end{figure}
 \end{proof}

\subsection{Genus of $\text{PIS}(R)$}
In this subsection, we classify all the non-local commutative Artinian rings with unity $R$ such that $\text{PIS}(R)$ has toroidal embedding. By $F_{123\ldots n}$, we mean $F_1 \times F_2 \times \cdots \times F_n$. We begin with the following lemma.

\begin{lemma}\label{genusgreaterthan2}
Let $R \cong F_1 \times F_2 \times F_3 \times F_4 \times F_5$, where each $F_i$ $(1 \le i \le 5)$ is a field. Then $g(\textnormal{PIS}(R)) \geq 3$.   
\end{lemma}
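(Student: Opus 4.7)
The plan is to apply the Euler-characteristic bound coming from Lemma \ref{eulerformulagenus}, after an explicit enumeration of the vertices and edges of $\textnormal{PIS}(R)$ for $R = F_1 \times F_2 \times F_3 \times F_4 \times F_5$. Because each $F_k$ admits only the ideals $0$ and $F_k$, every nonzero proper ideal of $R$ is uniquely of the form $I_S := \prod_{k \in S} F_k \times \prod_{k \notin S}\{0\}$ for some nonempty proper subset $S \subsetneq \{1,2,3,4,5\}$, giving $v := |V(\textnormal{PIS}(R))| = 2^5 - 2 = 30$. Since $I_S + I_T = I_{S \cup T}$ and the prime ideals of a product of fields are precisely the $I_U$ with $|U| = 4$, two distinct vertices $I_S$ and $I_T$ are adjacent if and only if $|S \cup T| = 4$.

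To count edges, I would partition $E(\textnormal{PIS}(R))$ according to the value of $U := S \cup T$, which ranges over the five sets $U_i := \{1,2,3,4,5\} \setminus \{i\}$. For a fixed $U_i$, assigning each of its four elements independently to one of the three disjoint sets $S \setminus T$, $T \setminus S$, $S \cap T$ yields $3^4 = 81$ ordered pairs with $S \cup T = U_i$; removing the three forbidden ordered pairs $(\varnothing, U_i)$, $(U_i, \varnothing)$, $(U_i, U_i)$ leaves $78$ ordered, hence $39$ unordered, admissible pairs. Distinct $U_i$ index disjoint edges (since $U_i$ is recovered from the edge $\{I_S, I_T\}$), so $e := |E(\textnormal{PIS}(R))| = 5 \cdot 39 = 195$.

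I would then note that $\textnormal{PIS}(R)$ is connected: the vertex $I_{\{1,2,3,4\}}$ is adjacent to all $14$ ideals $I_S$ with $\varnothing \neq S \subsetneq \{1,2,3,4\}$, and any vertex $I_S$ with $5 \in S$ admits some such $I_T$ as a neighbour (take a suitable $T \subseteq \{1,2,3,4\}$ with $S \cup T = \{1,2,3,4,5\} \setminus \{i\}$ for $i \in \{1,2,3,4\} \setminus S$, or, in the case $|S|=4$, any singleton in $S \cap \{1,2,3,4\}$). The standard consequence of Lemma \ref{eulerformulagenus} together with the face-edge bound $3f \leq 2e$ (valid for any simple graph on $v \geq 3$ vertices) then gives $e \leq 3v - 6 + 6\,g(\Gamma)$, i.e. $g(\Gamma) \geq \lceil (e - 3v + 6)/6 \rceil$. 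Plugging in $v = 30$, $e = 195$ yields $g(\textnormal{PIS}(R)) \geq \lceil 111/6 \rceil = 19$, which in particular is at least $3$.

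The only place where care is genuinely required is the edge enumeration: one must excise exactly the three degenerate ordered configurations and observe that the $U_i$ index the edges without overlap. As a sanity check I would double-count by computing $\deg(I_S)$ as a function of $|S|$ (obtaining degrees $8, 12, 16, 14$ for $|S|=1,2,3,4$ respectively) and verifying that the five vertices of size $1$, ten of size $2$, ten of size $3$, and five of size $4$ produce $\sum_S \deg(I_S) = 40 + 120 + 160 + 70 = 390 = 2e$. Nothing else is delicate; the Euler bound of $19$ is vastly stronger than the asserted lower bound of $3$, leaving ample slack.
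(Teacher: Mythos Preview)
Your argument is correct and in fact yields a far stronger conclusion ($g \geq 19$) than the stated $g \geq 3$; the vertex and edge counts, the connectivity check, and the degree sanity check all hold up.

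The paper proceeds quite differently: rather than a global Euler count, it exhibits an explicit subset of seventeen vertices whose induced subgraph contains a subdivision of $K_{5,5}$, and then invokes the Ringel--Youngs formula $g(K_{5,5}) = \lceil 9/4 \rceil = 3$. Your route is more elementary and quantitative, avoiding the need to locate any particular minor; the paper's route, while weaker numerically, has the practical advantage that the very same $K_{5,5}$ subdivision is reused verbatim in the crosscap arguments (Theorems~4.6 and~4.7) via Proposition~\ref{crosscap}. If you adopt your Euler-count approach here, those later references to ``the proof of Lemma~\ref{genusgreaterthan2}'' would need to be replaced by the analogous nonorientable Euler bound $cr(\Gamma) \geq \lceil (e - 3v + 6)/3 \rceil$, which of course works just as well.
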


\begin{proof}
Let $R \cong F_1 \times F_2 \times F_3 \times F_4 \times F_5$. Consider the subset 
\[ X = \{F_{145}, F_{24}, F_{45}, F_{14}, F_{15}, F_{345}, F_{1245}, F_{13}, F_{135}, F_{12}, F_{134}, F_{1345}, F_{1235}, F_{234}, F_{35}, F_{245}, F_{235}\}\] 
of vertices of $\text{PIS}(R)$. Note that the subgraph induced by $X$ contains a subgraph homeomorphic to $K_{5,5}$ (see Figure \ref{subdivisionK_55fig1}). By Proposition \ref{genus}, we have $g(\text{PIS}(R)) \geq 3$. 
\begin{figure}[h!]
\centering
\includegraphics[width=0.6 \textwidth]{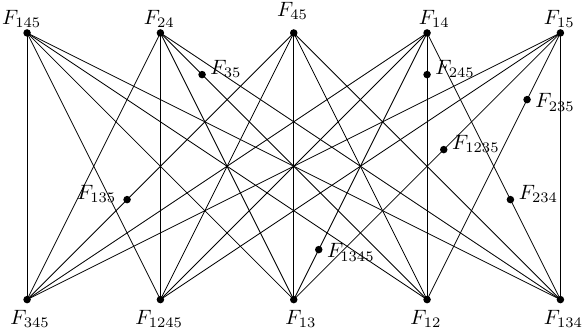}
\caption{Subgraph of $\text{PIS}(F_1 \times F_2 \times F_3 \times F_4 \times F_5)$}
\label{subdivisionK_55fig1}
\end{figure}
\end{proof}

\begin{theorem}\label{genus13product}
Let $R \cong R_1 \times R_2 \times \cdots \times R_n$ $(n \geq 3)$ be a non-local commutative ring, where each $R_i$ is a local ring with maximal ideal $\mathcal{M}_i$. Then $g(\textnormal{PIS}(R)) = 1$ if and only if $R \cong R_1 \times F_2 \times F_3$, where $R_1$ is a principal ideal ring with $\eta(\mathcal{M}_1) = 2$ and $F_2$, $F_3$ are fields.
\end{theorem}

\begin{proof}
First suppose that $g(\text{PIS}(R)) = 1$. By Lemma \ref{genusgreaterthan2}, we have $n \leq 4$. Let $R \cong R_1 \times R_2 \times R_3 \times R_4$. Assume that each $R_i$ ($1 \leq i \leq 4$) is a field. Notice that $v = 14$ and $e = 48$. By Lemma \ref{eulerformulagenus}, we obtain $f = 34$. It follows that $2e < 3f$, a contradiction. Consequently, $R \cong R_1 \times R_2 \times R_3$. If each $R_i$ is a field, then by Figure \ref{outerplanar_fig3}, $\text{PIS}(R)$ is a planar graph, which is not possible. Next, assume that $R_1$ and $R_2$ are not fields. Then by Figure \ref{K_54subdivisionR_1R_2F_3prime}, the graph $\text{PIS}(R_1 \times R_2 \times F_3 )$ contains a subgraph homeomorphic to $K_{5,4}$, a contradiction.
\begin{figure}[h!]
\centering
\includegraphics[width=0.6 \textwidth]{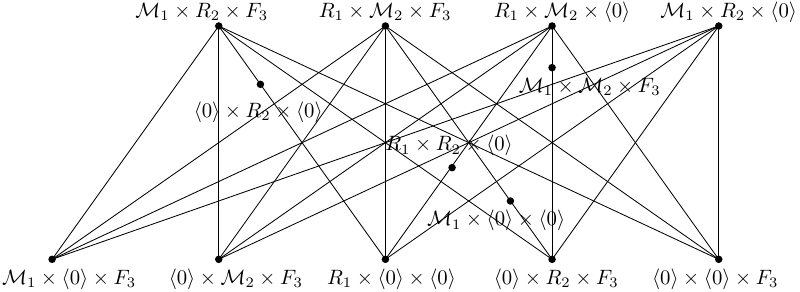}
\caption{Subgraph of $\text{PIS}(R_1 \times R_2 \times F_3 )$}
\label{K_54subdivisionR_1R_2F_3prime}
\end{figure}
It follows that $R_1$ is not a field but $R_2$ and $R_3$ are fields. Now assume that either $R_1$ is not a principal ideal ring or $R_1$ is a principal ideal ring with $\eta(\mathcal{M}_1) \ge 3$. Then $|\mathcal{I}^{*}(R_1)| \ge 2$. Let $K$ $(\neq \mathcal{M}_1)$ be a non-zero proper ideal of $R_1$. Then by Figure \ref{blockK_33fig1primeideal} and Lemma \ref{genusofblocks}, we get $g(\text{PIS}(R)) >1$, a contradiction. 
\begin{figure}[h!]
\centering
\includegraphics[width=0.6 \textwidth]{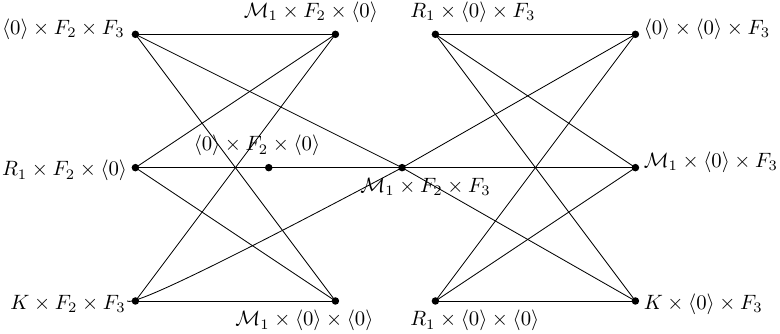}
\caption{Subgraph of $\text{PIS}(R_1 \times F_2 \times F_3)$, where $\mathcal{I}^{*}(R_1) = \{ K, \mathcal{M}_1 \}$}
\label{blockK_33fig1primeideal}
\end{figure}

Therefore, $R \cong R_1 \times F_2 \times F_3$, where $R_1$ is a principal ideal ring with $\eta(\mathcal{M}_1) = 2$. The converse follows from Figure \ref{genus1figure1primeideal}.
\begin{figure}[h!]
\centering
\includegraphics[width=0.6 \textwidth]{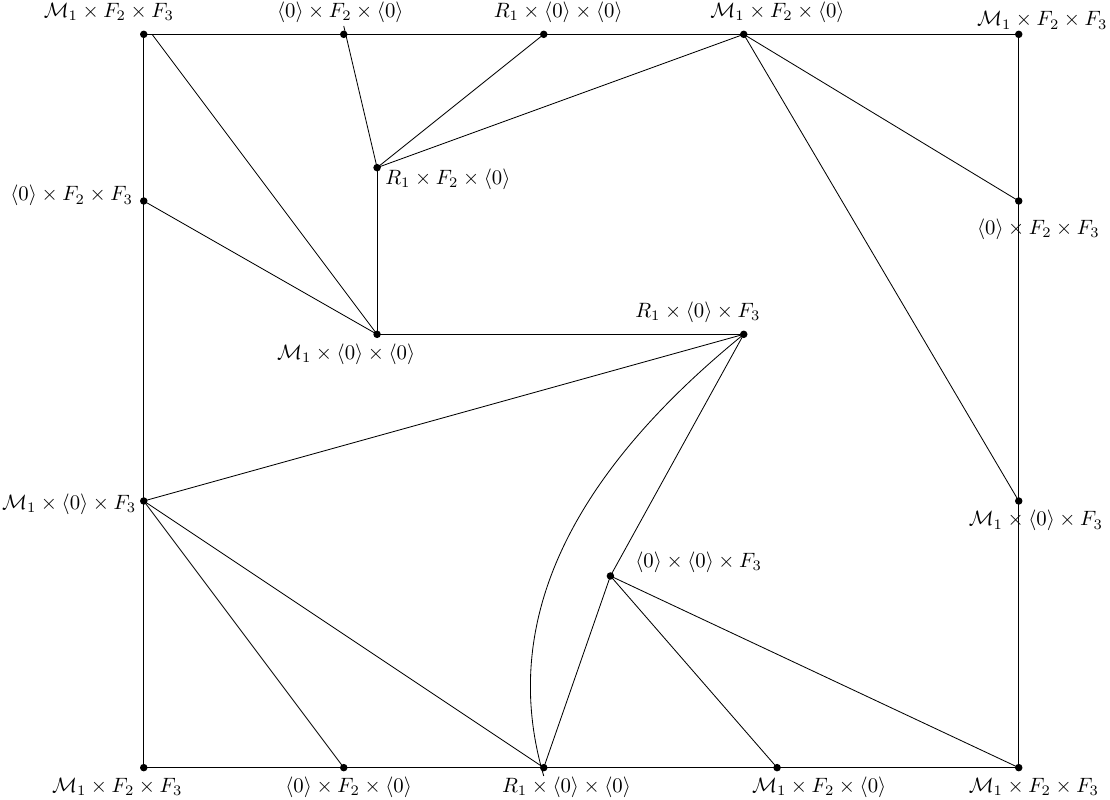}
\caption{Embedding of $\text{PIS}(R_1 \times F_2 \times F_3)$ in $\mathbb{S}_1$, where $\eta(\mathcal{M}_1) = 2$}
\label{genus1figure1primeideal}
\end{figure}
\end{proof}

\begin{theorem}\label{genusofR1R2}
  Let $R \cong R_1 \times R_2$, where $R_1$ and $R_2$ are local rings with maximal ideals $\mathcal{M}_1$ and $\mathcal{M}_2$, respectively. Then $g(\textnormal{PIS}(R)) = 1$ if and only if $R_1$ and $R_2$ are principal ideal rings with $\eta(\mathcal{M}_1) = 3$ and $\eta(\mathcal{M}_2) = 2$.
\end{theorem}

\begin{proof}
Suppose that $g(\text{PIS}(R)) = 1$. Assume that one of $R_i$ $(1 \le i \le 2)$ is not a principal ideal ring. Without loss of generality, let $R_1$ is not a principal ideal ring. Then $\mathcal{M}_1 = \langle x_1, x_2, \ldots, x_r \rangle$, where $r \ge 2$. Let $r=2$ i.e., $\mathcal{M}_1 = \langle x, y \rangle$ for some $x,y \in R_1$. Now let $\mathcal{I}^*(R_2) = \{\mathcal{M}_2\}$. Consider the vertices $I_1 = \langle x\rangle \times \langle 0 \rangle$, $I_2 = \langle x+y \rangle \times \langle 0 \rangle$, $I_3 = R_1 \times \mathcal{M}_2$, $I_4 = \langle x \rangle  \times R_2$, $I_5 = \langle y \rangle  \times R_2$, $I_6 = \langle x+y \rangle  \times R_2$, $I_7 = \mathcal{M}_1  \times R_2$, $I_8 = \mathcal{M}_1 \times \mathcal{M}_2$, $I_9 =  \langle x+y \rangle \times \mathcal{M}_2$ and $I_{10} =  \mathcal{M}_1 \times \langle 0 \rangle$. Note that the subgraph induced by these vertices contains a subgraph homeomorphic to $K_{5,4}$ (see Figure \ref{subdivision of K54 fig1}).
\begin{figure}[h!]
\centering
\includegraphics[width=0.6 \textwidth]{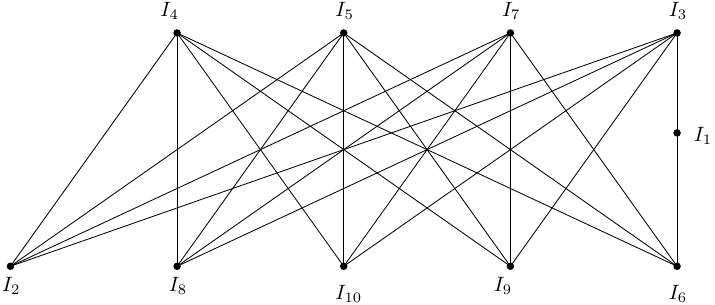}
\caption{Subgraph of $\text{PIS}(R_1 \times R_2)$, where $\mathcal{M}_1 = \langle x, y \rangle$ and $\mathcal{I}^*(R_2) = \{\mathcal{M}_2\}$}
\label{subdivision of K54 fig1}
\end{figure}
It follows that $g(\text{PIS}(R)) >1$, a contradiction. Next let, $R_2$ be a field. Let $x^2 \neq 0$. Then consider the ideals $J_1 = \langle x \rangle \times F_2$, $J_2 = \langle x^2, x +y \rangle \times F_2$, $J_3 = \langle x+y \rangle \times F_2$, $J_4 = \mathcal{M}_1 \times F_2$, $ J_5 = R_1 \times \langle 0 \rangle$, $J_1' = \langle y \rangle \times F_2$, $J_2' = \langle x^2, y \rangle \times \langle 0 \rangle$, $J_3' = \mathcal{M}_1 \times \langle 0 \rangle$, $J_4' = \langle y \rangle \times \langle 0 \rangle$ and $K_1 = \langle x \rangle \times \langle 0 \rangle$. Then the subgraph induced by these vertices contains a subgraph homeomorphic to $K_{5,4}$, where $\{ J_1, J_2, J_3, J_4, J_5 \}$ and $\{ J_1', J_2', J_3', J_4'\}$ are partition sets and $K_1$ join the vertices $J_5$ and $J_1'$. By Proposition \ref{genus}, we get a contradiction. Consequently, $x^2 = 0 =y^2$ and so $\mathcal{M}_1^2 = \langle xy \rangle$. Next, we find all the nontrivial ideals of the ring $R_1$. Let $I = \langle a_1x + a_2y \rangle$,  for some $a_1, a_2 \in R_1$, be the principal ideal of $R_1$ except $\langle x \rangle$, $\langle y \rangle$ and $\langle xy \rangle$. Clearly both $a_1, a_2 \notin \mathcal{M}_1$. If $a_1 \in U(R_1)$, then $I = \langle x + a_3y \rangle$ for some $a_3 \in R_1$. Suppose that $a_3 \in \mathcal{M}_1$, then we have $I = \langle x \rangle$. Therefore, principal ideals of $R_1$ are of the form $\langle xy \rangle, \langle y \rangle, \langle x + a_3y \rangle$ where $a_3 \in U(R_1) \cup \{0\}$. Note that $\mathcal{M}_1$ is the only ideal which is not principal. Therefore, all the nontrivial ideals of $R_1$ are $\langle xy \rangle, \langle x, y \rangle, \langle y \rangle, \langle x + a_3y \rangle$, where $a_3 \in U(R_1) \cup \{0\}$. If $|R_1/ \mathcal{M}_1| \ge 3$, then there exists $a \in U(R_1) \setminus \{1\}$ and $1-a \notin \mathcal{M}_1$. Now, we show that $\langle x + y \rangle + \langle x + ay \rangle = \mathcal{M}_1$. Clearly, $\langle x + y \rangle + \langle x + ay \rangle \subseteq \mathcal{M}_1$. Since $1-a \notin \mathcal{M}_1$, we have $1-a$ is a unit of $R_1$ and so $x = (1-a)^{-1}(x + ay) + [1-(1-a)^{-1}](x +y)$. It follows that $x \in \langle x + ay \rangle + \langle x + y \rangle$. Similarly, $y \in \langle x + ay \rangle + \langle x + y \rangle$. Consequently, $\mathcal{M} \subseteq \langle x + uy \rangle + \langle x + vy \rangle$. Let $|R_1/ \mathcal{M}_1| = 2$. It follows that $\text{PIS}(R_1)$ has excatly $5$ vertices $\langle x \rangle$, $\langle y \rangle$, $\langle xy \rangle$, $\langle x, y \rangle$ and $\langle x + y \rangle$. Thus, $J_1 = \langle x\rangle \times \langle 0 \rangle$, $J_2 = \langle y \rangle \times \langle 0 \rangle$, $J_3 = \langle x+y \rangle \times \langle 0 \rangle$, $J_4 = R_1 \times \langle 0 \rangle$, $J_5 = \langle xy \rangle \times \langle 0 \rangle$, $J_6 = \langle 0 \rangle \times F_2$, $J_7 = \langle x \rangle  \times F_2$, $J_8 = \langle y \rangle  \times F_2$, $J_9 = \langle x+y \rangle  \times F_2$, $J_{10} = \mathcal{M}_1  \times F_2$, $J_{11} = \mathcal{M}_1 \times \langle 0 \rangle$, $J_{12} = \langle xy \rangle \times F_2$ are the vertices of $\text{PIS}(R_1 \times F_2)$.

Now, consider $X = \{J_1, J_3, J_4, J_7, J_8, J_{10}, J_{11}\}$. Note that the subgraph $\Omega = \text{PIS}(X) - \{(J_1, J_8), (J_7, J_8), (J_3, J_{10}), \\ (J_{10}, J_{11})\}$ is homeomorphic to $K_{3,3}$. By Proposition \ref{genus}, we have $g(\Omega) = 1$ and by Lemma \ref{eulerformulagenus}, we get three faces in any embedding of $K_{3,3}$ in $\mathbb{S}_1$. Consequently, any embedding of $\Omega$ has at least one face of length $6$. If we insert the adjacent vertices $J_2, J_9$  and the respective edges incident to $J_2$ and $J_9$ to the embedding of $\Omega$, then $(J_2, J_9)$ must be embedded in a face of length $6$. To embed the graph $\text{PIS}(R)$ in $\mathbb{S}_1$, we need to add the edges  $\{(J_1, J_8), (J_7, J_8), (J_3, J_{10}), (J_{10}, J_{11})\}$ and remaining vertices to the embedding of $K_{3,3}$. Observe that this is not possible without edge crossings. It follows that $g(\text{PIS}(R)) >1$, a contradiction. If $x^2 = 0 =y^2 = xy$, then by the similar argument, we get a contradiction. Similarly, if $|R_1/ \mathcal{M}_1| \ge 3$, we get a contradiction. Therefore, the local rings $R_1$ and $R_2$ are principal.

Let $R \cong R_1 \times R_2$, where both $R_1$ and $R_2$ are principal ideal rings. If $\eta(\mathcal{M}_1) = \eta(\mathcal{M}_2) =2$, then by Theorem \ref{Planar_primeidealsum}, we get a contradiction. Next, let $\eta(\mathcal{M}_1), \eta(\mathcal{M}_2) \ge 3$. Then $\text{PIS}(R)$ contains a subgraph with two blocks of $K_{3,3}$ (see Figure \ref{blockofk33fig2}). By Proposition \ref{genus} and Lemma \ref{genusofblocks}, we have $g(\text{PIS}(R)) >1$, a contradiction. 
\begin{figure}[h!]
\centering
\includegraphics[width=0.6 \textwidth]{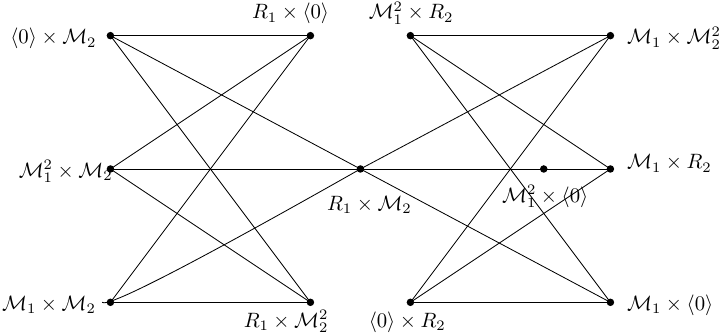}
\caption{Subgraph of $\text{PIS}(R_1 \times R_2)$, where $\eta(\mathcal{M}_1), \eta(\mathcal{M}_2) \ge 3$ }
\label{blockofk33fig2}
\end{figure}
Now assume that $R_1$ is a principal ideal ring and $R_2$ is a field. By Theorem \ref{Planar_primeidealsum}, we get $\text{PIS}(R_1 \times F_2)$ is a planar graph, a contradiction. If $\eta(\mathcal{M}_1) \ge 4$ and $\eta(\mathcal{M}_2)=2$, then by Figure \ref{blockK_33fig3} and Lemma \ref{genusofblocks}, we obtain $g(\text{PIS}(R)) \geq 2$, a contradiction. 
\begin{figure}[h!]
\centering
\includegraphics[width=0.6 \textwidth]{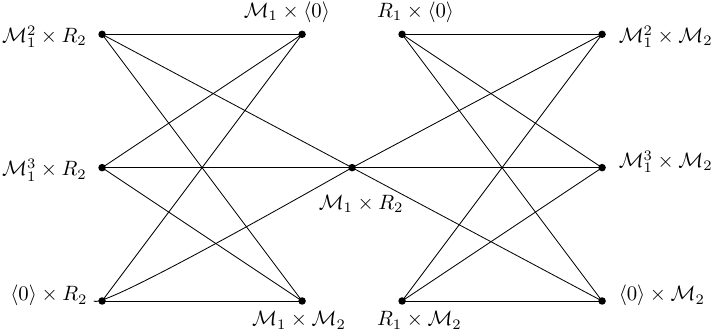}
\caption{Subgraph of $\text{PIS}(R_1 \times R_2)$, where $\eta(\mathcal{M}_1) \ge 4$ and $\eta(\mathcal{M}_2)=2$}
\label{blockK_33fig3}
\end{figure}

Therefore, $R \cong R_1 \times R_2$, where $R_1$ and $R_2$ are principal ideal rings with $\eta(\mathcal{M}_1) = 3$ and $\eta(\mathcal{M}_2) = 2$.
The converse follows from Figure \ref{genus1fig2}.
\begin{figure}[h!]
\centering
\includegraphics[width=0.6 \textwidth]{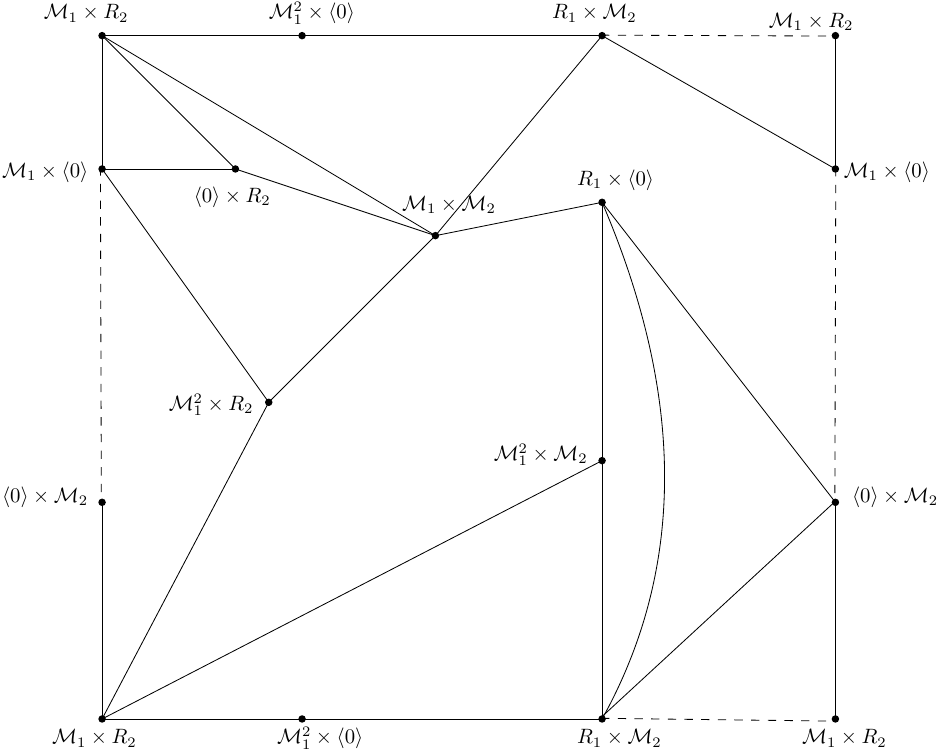}
\caption{Embedding of $\text{PIS}(R_1 \times R_2)$ in $\mathbb{S}_1$, where $\eta(\mathcal{M}_1) = 3$ and $\eta(\mathcal{M}_2) = 2$}
\label{genus1fig2}
\end{figure}
\end{proof}

\subsection{Crosscap of $\text{PIS}(R)$}
In this subsection, we classify all the non-local commutative Artinian rings $R$ with unity such that the crosscap of $\text{PIS}(R)$ is at most two. The following theorem asserts that for a non-local commutative ring $R$ its prime ideal sum graph $\text{PIS}(R)$ cannot be of crosscap one.

\begin{theorem}
    For any non-local commutative Artinian ring $R$ with unity, the prime ideal sum graph $\textnormal{PIS}(R)$ can not be projective planar.  
\end{theorem}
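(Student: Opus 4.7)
The plan is to combine Theorem \ref{Planar_primeidealsum} with a case-by-case argument showing that $cr(\text{PIS}(R)) \geq 2$ whenever $\text{PIS}(R)$ is non-planar. Since $cr(\text{PIS}(R)) \in \{0\} \cup \{k : k \geq 2\}$ as soon as it avoids the value $1$, this is enough to preclude $\text{PIS}(R)$ from being projective planar.

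First I would invoke Theorem \ref{Planar_primeidealsum}: if $R$ is one of the four planar types listed there then $cr(\text{PIS}(R)) = 0 \neq 1$ and the claim is immediate. Otherwise $\text{PIS}(R)$ is non-planar and the contrapositive of Theorem \ref{Planar_primeidealsum} places $R$ in one of: (a) $n \geq 4$; (b) $n = 3$ with some $R_i$ not a field; (c) $n = 2$ with either a non-principal factor or with both factors non-fields and at least one $|\mathcal{I}^*(R_j)| \geq 2$. The strategy in each regime is to exhibit a subgraph $H \subseteq \text{PIS}(R)$ with $cr(H) \geq 2$; crosscap monotonicity under subgraphs then yields $cr(\text{PIS}(R)) \geq 2$. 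Natural candidates by Proposition \ref{crosscap} are $K_{4,4}$ and $K_{3,5}$ (each of crosscap $2$) and $K_{5,4}$ (of crosscap $3$), together with configurations having two blocks isomorphic to $K_{3,3}$ whose combined crosscap is $\geq 2$ by Lemma \ref{crosscapofblocks}.

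For case (a), direct enumeration on $F_1 \times F_2 \times F_3 \times F_4$ yields $v = 14$ vertices and $e = 48$ edges; the Euler-type inequality $e \leq 3(v + k - 2)$ valid for any simple graph embedded in $\mathbb{N}_k$ forces $k \geq 4$. For $n \geq 4$ more generally, the $2^4 - 2$ ideals of the form $A_1 \times A_2 \times A_3 \times A_4 \times R_5 \times \cdots \times R_n$ with $A_i \in \{(0), R_i\}$ induce a subgraph isomorphic to $\text{PIS}(F_1 \times F_2 \times F_3 \times F_4)$, so the same bound propagates. For case (b), in the subregime $|\mathcal{I}^*(R_1)| \geq 2$ with $R_2, R_3$ fields the two-$K_{3,3}$-block configuration of Figure \ref{blockK_33fig1primeideal} already built in the proof of Theorem \ref{genus13product} gives $cr \geq 2$, while in the subregime with at least two non-field factors the $K_{5,4}$-subdivision of Figure \ref{K_54subdivisionR_1R_2F_3prime} gives $cr \geq 3$. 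For case (c), the $K_{5,4}$-subdivision of Figure \ref{subdivision of K54 fig1} and the two-$K_{3,3}$-block configuration of Figure \ref{blockofk33fig2}, both extracted from the proof of Theorem \ref{genusofR1R2}, dispense with the subregimes.

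The main obstacle is the boundary subcase $R \cong R_1 \times F_2 \times F_3$ with $\mathcal{I}^*(R_1) = \{\mathcal{M}_1\}$. There Theorem \ref{genus13product} already pins the orientable genus at exactly $1$, the underlying graph has only $v = 10$ and $e = 24$, and the Euler inequality on $\mathbb{N}_1$ does not obstruct an embedding. The proof in this case must therefore produce a topological obstruction directly: I would exhibit an explicit $K_{4,4}$ or $K_{3,5}$ subdivision in the graph, using the degree-$6$ ideals $\mathcal{M}_1 \times F_2 \times F_3$ and $R_1 \times (0) \times (0)$ as two of the hubs and the primes $R_1 \times (0) \times F_3, R_1 \times F_2 \times (0)$ as subdivision vertices along the otherwise-missing edges, together with careful routing through the degree-$4$ ideals $\mathcal{M}_1 \times (0) \times (0)$, $(0) \times F_2 \times F_3$, etc. Verifying that such a subdivision really exists is the technical core of the proof.
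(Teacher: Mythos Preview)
Your overall architecture matches the paper's: reduce by Theorem~\ref{Planar_primeidealsum}, then in each remaining non-planar regime exhibit a subgraph forcing $cr\ge 2$ via Proposition~\ref{crosscap} or Lemma~\ref{crosscapofblocks}. The $n\ge 4$ Euler count, the $K_{5,4}$ subdivision of Figure~\ref{K_54subdivisionR_1R_2F_3prime}, and the two-$K_{3,3}$-block configurations of Figures~\ref{blockK_33fig1primeideal} and~\ref{blockofk33fig2} are exactly what the paper uses in those subcases.

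There are, however, two genuine gaps. First, your case (c) enumeration is incomplete: the figures you cite do not cover $R\cong R_1\times R_2$ with $\mathcal{I}^*(R_1)=\{I_1,\mathcal{M}_1\}$ and $\mathcal{I}^*(R_2)=\{\mathcal{M}_2\}$. Figure~\ref{subdivision of K54 fig1} requires $\mathcal{M}_1$ to be $2$-generated (non-principal), and Figure~\ref{blockofk33fig2} requires $|\mathcal{I}^*(R_2)|\ge 2$; neither applies here, nor does Figure~\ref{blockK_33fig3} (which needs $|\mathcal{I}^*(R_1)|\ge 3$). This ring is precisely one of the two crosscap-$2$ rings isolated in the next theorem, and the paper handles it by a direct face-by-face analysis of a $K_{3,3}$ embedding in $\mathbb{N}_1$, not by any forbidden-minor argument. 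You have simply missed this boundary case.

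Second, for the acknowledged boundary case $R\cong R_1\times F_2\times F_3$ with $\mathcal{I}^*(R_1)=\{\mathcal{M}_1\}$, your plan to locate a $K_{4,4}$ or $K_{3,5}$ subdivision is speculative: you have not produced one, and the paper does not attempt this route either. Instead the paper takes an explicit subgraph $G$ homeomorphic to $K_{3,3}$, observes that any embedding of $G$ in $\mathbb{N}_1$ has three $4$-faces and one $6$-face, and then argues that the remaining vertices $R_1\times(0)\times(0)$ and $(0)\times(0)\times F_3$ together with their incident edges cannot be inserted into the $6$-face without a crossing. Unless you can actually exhibit the promised $K_{4,4}$ or $K_{3,5}$ subdivision (which is not obvious in a $10$-vertex, $24$-edge graph), the argument in this subcase is not complete.
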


\begin{proof}
Let $R \cong R_1 \times R_2 \times \cdots \times R_n$ ($n \ge 2$), where each $R_i$ is local ring with maximal ideal $\mathcal{M}_i$. Suppose that $cr(\text{PIS}(R)) = 1$. Let $n \ge 5$. By the proof of Lemma \ref{genusgreaterthan2}, we get a subgraph of $\text{PIS}(R)$ which is homeomorphic to $K_{5,5}$. By Proposition \ref{crosscap}, we get $n \le 4$. Now suppose that $R \cong F_1 \times F_2 \times F_3 \times F_4$. Then we get $v= 14$ and $e = 48$. Lemma \ref{eulerformulacrosscap} yields $f = 35$. It follows that $2e < 3f$, a contradiction. Thus, $n \le 3$. We may now suppose that $R \cong R_1 \times R_2 \times R_3$. If each $R_i$ is a field, then by Theorem \ref{Planar_primeidealsum}, we get a contradiction. Next, assume that both $R_1$ and $R_2$ are not fields but $R_3$ is a field. By Figure \ref{K_54subdivisionR_1R_2F_3prime}, the graph $\text{PIS}(R_1 \times R_2 \times F_3 )$ contains a subgraph homeomorphic to $K_{5,4}$, a contradiction. Now suppose that $R_1$ is not a field but both $R_2$ and $R_3$ are fields. If $\mathcal{I}^*(R_1) = \{ K, \mathcal{M}_1 \}$, then by Figure \ref{blockK_33fig1primeideal} and Lemma \ref{crosscapofblocks}, we get $cr(\text{PIS}(R)) > 1$, a contradiction. Therefore, $R\cong R_1 \times F_2 \times F_3$, where $\mathcal{I}^*(R_1) = \{\mathcal{M}_1 \}$. Now consider the set $X = \{ \mathcal{M}_1 \times F_2 \times \langle 0 \rangle,\ \mathcal{M}_1 \times \langle 0 \rangle \times \langle 0 \rangle,\ \mathcal{M}_1 \times F_2 \times F_3,\ R_1 \times F_2 \times \langle 0 \rangle,\ \mathcal{M}_1 \times \langle 0 \rangle \times F_3,\ \langle 0 \rangle \times F_2 \times F_3,\ \langle 0 \rangle \times F_2 \times \langle 0 \rangle,\ R_1 \times \langle 0 \rangle \times F_3 \}$ and the subgraph $G = \text{PIS}(X) - \{ (\mathcal{M}_1 \times \langle 0 \rangle \times \langle 0 \rangle, \ \mathcal{M}_1 \times F_2 \times F_3), \ ( \langle 0 \rangle \times F_2 \times \langle 0 \rangle, \ \mathcal{M}_1 \times \langle 0 \rangle \times F_3), \ ( \mathcal{M}_1 \times F_2 \times \langle 0 \rangle, \ \mathcal{M}_1 \times F_2 \times F_3 ), \ (\mathcal{M}_1 \times \langle 0 \rangle \times F_3,\ \langle 0 \rangle \times F_2 \times F_3) \}$. Then $G$ is homeomorphic to $K_{3,3}$. By Lemma \ref{crosscap}, we have $cr(G) = 1$. Moreover, by Lemma \ref{eulerformulacrosscap}, we get four faces in any embedding of $G$ in $\mathbb{N}_1$. Consequently, any embedding of $G$ in $\mathbb{N}_1$ has three faces of length $4$ and one face of length $6$. To embed $\text{PIS}(R)$ from $G$ in $\mathbb{N}_1$, first we insert vertex $R_1 \times \langle 0 \rangle \times \langle 0 \rangle$ in an embedding of $G$ in $\mathbb{N}_1$. Both the adjacent vertices $R_1 \times \langle 0 \rangle \times \langle 0 \rangle$ and $\langle 0 \rangle \times \langle 0 \rangle \times F_3$ must be inserted in the same face $F'$. Note that $R_1 \times \langle 0 \rangle \times \langle 0 \rangle$ is adjacent to $\mathcal{M}_1 \times F_2 \times \langle 0 \rangle$, $R_1 \times F_2 \times \langle 0 \rangle$, $\mathcal{M}_1 \times \langle 0 \rangle \times F_3$. Also, $\langle 0 \rangle \times \langle 0 \rangle \times F_3 \sim \mathcal{M}_1 \times F_2 \times F_3$. Moreover, $\langle 0 \rangle \times \langle 0 \rangle \times F_3 \sim R_1 \times \langle 0 \rangle \times F_3 \sim M_1 \times \langle 0 \rangle \times \langle 0 \rangle$. Consequently, the face $F'$ should be of length $6$. After inserting $R_1 \times \langle 0 \rangle \times \langle 0 \rangle$ in $F'$, we need to add the edges $(\langle 0 \rangle \times \langle 0 \rangle \times F_3,\  R_1 \times \langle 0 \rangle \times \langle 0 \rangle ),\ (\langle 0 \rangle \times \langle 0 \rangle \times F_3,\ \mathcal{M}_1 \times F_2 \times F_3), \ (\langle 0 \rangle \times \langle 0 \rangle \times F_3,\ R_1 \times \langle 0 \rangle \times F_3), (\langle 0 \rangle \times \langle 0 \rangle \times F_3, \ \mathcal{M}_1 \times F_2 \times \langle 0 \rangle)$, which is not possible without edge crossing (see Figure \ref{faceF'}), a contradiction. Thus, $R \cong R_1 \times R_2$.

\begin{figure}[h!]
\centering
\includegraphics[width=0.4 \textwidth]{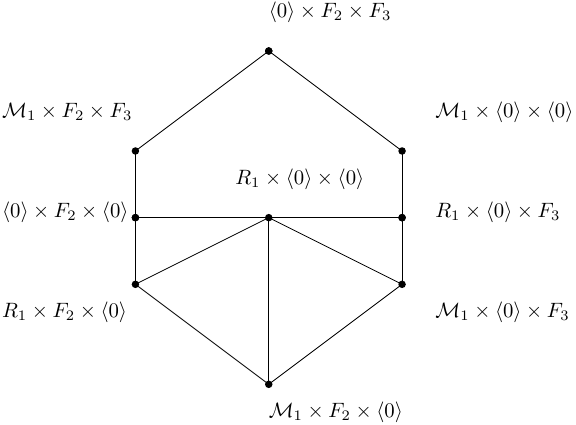}
\caption{The face $F'$ of $\text{PIS}(R_1 \times F_2 \times F_3)$}
\label{faceF'}
\end{figure}

Now let $R \cong R_1 \times R_2$. Assume that one of $R_i$ $(1 \le i \le 2)$ is not a principal ideal ring. Without loss of generality, let $R_1$ is not a principal ideal ring. In the similar lines of the proof of Theorem \ref{genusofR1R2}, we get $cr(\text{PIS}(R)) >1$. Therefore, $ R \cong R_1 \times R_2$, where the local rings $R_1$ and $R_2$ are principal. If both $R_1$ and $R_2$ are fields, then the graph $\text{PIS}(R_1 \times R_2)$ is planar, a contradiction. If $\eta(\mathcal{M}_1) = \eta(\mathcal{M}_2) = 2$, then by Theorem \ref{Planar_primeidealsum}, the graph $\text{PIS}(R_1 \times R_2)$ is planar, again a contradiction. Next, let $\eta(\mathcal{M}_1), \eta(\mathcal{M}_2) \ge 3$. Then $\text{PIS}(R)$ contains a subgraph which has two $K_{3,3}$ blocks (see Figure \ref{blockofk33fig2}). By Proposition \ref{crosscap} and Lemma \ref{crosscapofblocks}, we get $cr(\text{PIS}(R)) >1$, a contradiction. Now assume that $R_1$ is a principal ideal ring and $R_2$ is a field. By Theorem, \ref{Planar_primeidealsum}, the graph $\text{PIS}(R)$ is planar, a contradiction. Next, let $\eta(\mathcal{M}_1) \ge 4$ and $\eta(\mathcal{M}_2) = 2$. Then by Figure \ref{blockK_33fig3} and Lemma \ref{crosscapofblocks}, we obtain $cr(\text{PIS}(R)) \geq 2$, a contradiction. Now, let $R \cong R_1 \times R_2$, where $R_1$, $R_2$ are principal ideal rings with $\eta(\mathcal{M}_1) = 3$ and $\eta(\mathcal{M}_2) = 2$. Then consider the set
\[ Y = \{ R_1 \times \langle 0 \rangle, \ \mathcal{M}_1 \times R_2, \ R_1 \times \mathcal{M}_2, \ \mathcal{M}_1^2 \times \mathcal{M}_2, \ \langle 0 \rangle \times \mathcal{M}_2, \ \mathcal{M}_1 \times \mathcal{M}_2 \}  \] 
and the subgraph $G' = \text{PIS}(Y) - \{ (R_1 \times \langle 0 \rangle, R_1 \times \mathcal{M}_2 ) \} $. Then the subgraph $G'$ is homeomorphic to $K_{3,3}$. It follows that any embedding of $G'$ in $\mathbb{N}_1$ has $4$ faces. Now to embed $\text{PIS}(R)$ in $\mathbb{N}_1$ through $G'$, first we insert the vertex $\mathcal{M}_1 \times \langle 0 \rangle$. Since $\mathcal{M}_1 \times \langle 0 \rangle$ is adjacent to $\langle 0 \rangle \times R_2$ and $\mathcal{M}_1^2 \times R_2$, we must have $\mathcal{M}_1 \times \langle 0 \rangle$, $\langle 0 \rangle \times R_2$ and $\mathcal{M}_1^2 \times R_2$ in the same face $F''$. Note that $\mathcal{M}_1 \times \langle 0 \rangle$ is adjacent to $\mathcal{M}_1 \times R_2$, $R_1 \times \mathcal{M}_2$ and $\langle 0 \rangle \times R_2$. Also $\mathcal{M}_1^2 \times R_2$ is adjacent to $\mathcal{M}_1 \times \mathcal{M}_2$. Therefore, the face $F''$ contains the vertices $\mathcal{M}_1 \times R_2$, $R_1 \times \mathcal{M}_2$ and $\mathcal{M}_1 \times \mathcal{M}_2$. Now after inserting the edges $( \mathcal{M}_1 \times \langle 0 \rangle, \ \mathcal{M}_1 \times R_2)$, $( \mathcal{M}_1 \times \langle 0 \rangle, \ R_1 \times \mathcal{M}_2)$, $(\langle 0 \rangle \times R_2, \ \mathcal{M}_1 \times \mathcal{M}_2)$, $(\langle 0 \rangle \times R_2, \ \mathcal{M}_1 \times \langle 0 \rangle)$ and $(\langle 0 \rangle \times R_2, \ \mathcal{M}_1 \times R_2)$, the insertion of the edges $(\mathcal{M}_1^2 \times R_2, \ \mathcal{M}_1 \times \mathcal{M}_2)$, $(\mathcal{M}_1^2 \times R_2, \ \mathcal{M}_1 \times \langle 0 \rangle )$, $(\mathcal{M}_1^2 \times R_2, \ \mathcal{M}_1 \times R_2)$ without edge crossing is not possible. Hence, $cr(\text{PIS}(R)) \ge 2$.
\end{proof}

\begin{theorem}
    Let $R$ be a non-local commutative ring such that  $R \cong R_1 \times R_2 \times \cdots \times R_n$ $(n \geq 2)$, where each $R_i$ is a local ring with maximal ideal $\mathcal{M}_i$. Then $cr(\textnormal{PIS}(R)) = 2$ if and only if 
    \begin{enumerate}[\rm(i)]
    \item $R \cong R_1 \times F_2 \times F_3$, where $R_1$ is a principal ideal ring with $\eta(\mathcal{M}_1) =2$ and $F_2$, $F_3$ are fields.
    
    \item $R \cong R_1 \times R_2$, where both $R_1$ and $R_2$ are principal ideal rings with $\eta(\mathcal{M}_1) =3$ and $\eta(\mathcal{M}_2) =2$.
    \end{enumerate}
\end{theorem}

\begin{proof}
Suppose that $cr(\text{PIS}(R)) = 2$. If $n \ge 5$, then by the proof of the Lemma \ref{genusgreaterthan2}, we get a subgraph of $\text{PIS}(R)$ homeomorphic to $K_{5,5}$. By the Proposition \ref{crosscap}, we get $n \le 4$. Now let $R \cong F_1 \times F_2 \times F_3 \times F_4$. Then we get $v= 14$ and $e = 48$. By Lemma \ref{eulerformulacrosscap}, we have $f = 34$. It implies that $2e < 3f$, a contradiction. Thus, $n \le 3$. Let $R \cong R_1 \times R_2 \times R_3$. Suppose that both $R_1$ and $R_2$ are not fields but $R_3$ is a field. Then by Figure \ref{K_54subdivisionR_1R_2F_3prime}, $\text{PIS}(R)$ contains a subgraph homeomorphic to $K_{5,4}$, a contradiction. Next, let $R \cong R_1 \times F_2 \times F_3$ such that $\mathcal{I}^*(R_1) = \{ \mathcal{M}_1, K\}$. Then by Figure \ref{blockK_33fig1primeideal}, note that $\text{PIS}(R)$ contains a subgraph $G$, which has two $K_{3,3}$ blocks. Then $cr(\text{PIS}(G)) =2$. To embed $\text{PIS}(R)$ in $\mathbb{N}_2$ from $G$, first we insert the edges $e_1 = (\langle 0 \rangle \times F_2 \times F_3,\ \mathcal{M}_1 \times \langle 0 \rangle \times F_3)$, $e_2 = (K \times F_2 \times F_3,\ \mathcal{M}_1 \times \langle 0 \rangle \times F_3)$ and $e_3 = (\mathcal{M}_1 \times F_2 \times F_3,\ \mathcal{M}_1 \times F_2 \times \langle 0 \rangle)$. After adding the edges $e_1$ and $e_2$, note that we have two triangles: \rm(i) $\langle 0 \rangle \times F_2 \times F_3 \sim \mathcal{M}_1 \times F_2 \times F_3 \sim \mathcal{M}_1 \times \langle 0 \rangle \times F_3 \sim \langle 0 \rangle \times F_2 \times F_3$, and \rm(ii) $K \times F_2 \times F_3 \sim \mathcal{M}_1 \times F_2 \times F_3 \sim \mathcal{M}_1 \times \langle 0 \rangle \times F_3 \sim K \times F_2 \times F_3$. Also $\mathcal{M}_1 \times F_2 \times \langle 0 \rangle$ is adjacent to $\langle 0 \rangle \times F_2 \times F_3$, $\mathcal{M}_1 \times \langle 0 \rangle \times F_3$ and $K \times F_2 \times F_3$. Since the vertex $\mathcal{M}_1 \times \langle 0 \rangle \times \langle 0 \rangle$ is adjacent to $\langle 0 \rangle \times F_2 \times F_3$ and $K \times F_2 \times F_3$, we cannot insert the edge $e_3$ without edge crossing. It implies that $|\mathcal{I}^*(R_1)|=1$. Note that if $R_1$ is not a principal ideal ring, then $|\mathcal{I}^*(R_1)| \ge 2$. Therefore, $R \cong R_1 \times F_2 \times F_3$ such that $R_1$ is a principal ideal ring with $\eta( \mathcal{M}_1)=2$.
 
Next, let $R \cong R_1 \times R_2$. First, assume that one of $R_i$ $(1 \le i \le 2)$ is not a principal ideal ring. Without loss of generality, let $R_1$ is not a principal ideal ring. Then $\mathcal{M}_1 = \langle x_1, x_2, \ldots, x_r \rangle$, where $r \ge 2$. Let $r=2$ i.e., $\mathcal{M}_1 = \langle x, y \rangle$ for some $x,y \in R_1$. Now let $\mathcal{I}^*(R_2) = \{\mathcal{M}_2\}$. By the proof of Theorem \ref{genusofR1R2}, $\text{PIS}(R)$ contains a subgraph homeomorphic to $K_{5,4}$, a contradiction. Let $R \cong R_1 \times F_2$ and $\mathcal{M}_1 = \langle x,y \rangle $. If $x^2 \neq 0$, then by the proof of Theorem \ref{genusofR1R2}, $\text{PIS}(R)$ contains a subgraph homeomorphic to $K_{5,4}$, which is not possible. Therefore, $R \cong R_1 \times F_2$ and $\mathcal{M}_1 = \langle x,y \rangle $ such that $x^2 = y^2 = 0$. Let $J_1 = \langle x\rangle \times \langle 0 \rangle$, $J_2 = \langle y \rangle \times \langle 0 \rangle$, $J_3 = \langle x+y \rangle \times \langle 0 \rangle$, $J_4 = R_1 \times \langle 0 \rangle$, $J_5 = \langle xy \rangle \times \langle 0 \rangle$, $J_6 = \langle 0 \rangle \times F_2$, $J_7 = \langle x \rangle  \times F_2$, $J_8 = \langle y \rangle  \times F_2$, $J_9 = \langle x+y \rangle  \times F_2$, $J_{10} = \mathcal{M}_1  \times F_2$, $J_{11} = \mathcal{M}_1 \times \langle 0 \rangle$, $J_{12} = \langle xy \rangle \times F_2$ be the vertices of $\text{PIS}(R_1 \times F_2)$. Consider the set $X =\{J_1, J_7, J_8, J_9, J_{10}, J_{11} \}$. Then the subgraph induced the set $X \setminus \{ J_1 \}$ is isomorphic to $K_5$. Thus, $cr(\text{PIS}(X)) \ge 1$. Observe that the subgraph $\text{PIS}(X)$ is isomorphic to a subgraph of $K_6$ and all the faces of $K_6$ in $\mathbb{N}_1$ are triangles. Thus, $cr(\text{PIS}(X)) = 1$. Note that $\text{PIS}(X)$ has $6$ vertices and $13$ edges. By Lemma \ref{eulerformulacrosscap}, it follows that $f = 8$ in any embedding of $\text{PIS}(X)$ in $\mathbb{N}_1$. Therefore, $\text{PIS}(X)$ has either seven faces of length $3$ and one face of length $5$, or six faces of length $3$ and two faces of length $4$. Notice that the $5$-length face of $\text{PIS}(X)$ contains the vertices $J_1, J_7, J_8, J_{10}$ and $J_{11}$. Now first we insert $J_2$ and $J_4$ in an embedding of $\text{PIS}(X)$ in $\mathbb{N}_1$. Since $J_2 \sim J_4$, they must be inserted in the same face. Note that $J_2$ is adjacent to $J_7, J_9, J_{10}$ and $J_4 \sim J_{11}$. Also, $J_{11}$ is adjacent to $J_7, J_8, J_9$ and $J_{10}$. Moreover, $J_9 \sim J_{10}$, $J_7 \sim J_9$ and $J_ 7\sim J_{10}$. It follows that both $J_2$ and $J_4$ must be inserted in a face of length $5$ containing the vertices $J_7, J_9, J_{10}, J_{11}$ which is not possible. Consequently, $ cr(\text{PIS}(X \cup \{ J_2, J_4\})) \ge 2$. Next, consider the set $Y = V(\text{PIS}(R)) \setminus \{J_3\}$. Since $(X \cup \{ J_2, J_4\}) \subsetneq Y$, we have $cr(\text{PIS}(Y)) \ge 2 $. For the subgraph $\text{PIS}(Y)$, note that $v=9$ and $e = 21$. It implies that $f=12$ in any embedding of $\text{PIS}(Y)$ in $\mathbb{N}_2$. Thus, $\text{PIS}(Y)$ have seven faces of length $3$, two faces of length $4$ and one face of length $5$ in any embedding of $\text{PIS}(Y)$ in $\mathbb{N}_2$. Now to embed $\text{PIS}(R)$ in $\mathbb{N}_2$, we insert $J_3$ in a face $F$ of an embedding of $\text{PIS}(Y)$. Then $F$ must contain $J_4$, $J_7$, $J_8$ and $J_{10}$, but $J_4$ is not adjacent to $J_7$, $J_8$ and $J_{10}$. Thus, $F$ must contain at least $6$ vertices, which is not possible. It follows that $cr(\text{PIS}(R)) \ge 3$.   

Consequently, $ R \cong R_1 \times R_2$, where $R_1$ and $R_2$ are principal local rings. If both $R_1$ and $R_2$ have at most one nontrivial ideal i.e., $\eta(\mathcal{M}_1), \eta(\mathcal{M}_2) \le 2$, then by Theorem \ref{Planar_primeidealsum}, we get a contradiction. Next, let $\eta(\mathcal{M}_1), \eta(\mathcal{M}_2) \ge 3$. Note that $\text{PIS}(R)$ contains a subgraph $G_1$ as two blocks of $K_{3,3}$ (see Figure \ref{blockofk33fig2}). By Lemma \ref{crosscapofblocks}, $cr(\text{PIS}(G_1)) =2$. To embed $\text{PIS}(R)$ in $\mathbb{N}_2$ from $G_1$, first we insert the edges $e_1' = (\langle 0 \rangle \times R_2 ,\ \mathcal{M}_1 \times \mathcal{M}_2 )$, $e_2' = (\mathcal{M}_1^2 \times R_2, \ \mathcal{M}_1 \times \mathcal{M}_2)$, $e_3' = (\mathcal{M}_1 \times R_2 ,\ \mathcal{M}_1 \times \mathcal{M}_2)$ and $e_4' = (\mathcal{M}_1 \times R_2 ,\ \mathcal{M}_1 \times \langle 0 \rangle)$. After inserting the edges $e_1'$, $e_2'$ and $e_3'$, note that we have two triangles: \rm(i) $\langle 0 \rangle \times R_2 \sim \mathcal{M}_1 \times \mathcal{M}_2 \sim \mathcal{M}_1 \times R_2 \sim \langle 0 \rangle \times R_2$, and \rm (ii) $\mathcal{M}_1 \times \mathcal{M}_2  \sim \mathcal{M}_1^2 \times R_2  \sim \mathcal{M}_1 \times R_2 \sim \mathcal{M}_1 \times \mathcal{M}_2$. Also, $\mathcal{M}_1 \times \langle 0 \rangle$ is adjacent to $\langle 0 \rangle \times R_2 $, $\mathcal{M}_1^2 \times R_2 $, $R_1 \times \mathcal{M}_2$. Moreover, $R_1 \times \mathcal{M}_2 \sim \mathcal{M}_1 \times \mathcal{M}_2$. Since $\langle 0 \rangle \times R_2 \sim \mathcal{M}_1 \times \mathcal{M}_2^2 \sim \mathcal{M}_1^2 \times R_2$, we cannot insert the edge $e_4'$ without edge crossing. Now, let $\eta(\mathcal{M}_1) \ge 4$ and $\eta(\mathcal{M}_2) = 2$. Then $\text{PIS}(R)$ contains a subgraph $G''$ which has two blocks of $K_{3,3}$ (see Figure \ref{blockK_33fig3}). Then by Lemma \ref{crosscapofblocks}, $cr(\text{PIS}(G'')) = 2$. To embed $\text{PIS}(R)$ in $\mathbb{N}_2$ through $G''$, first we insert the edges $e_1'' = ( R_1 \times \mathcal{M}_2,\ \mathcal{M}_1 \times \mathcal{M}_2)$, $e_2'' = ( \mathcal{M}_1 \times \langle 0 \rangle,\ \mathcal{M}_1 \times R_2)$, $e_3'' = ( R_1 \times \mathcal{M}_2,\ \mathcal{M}_1^2 \times \langle 0 \rangle)$, $e_4'' = ( \mathcal{M}_1^2 \times \langle 0 \rangle,\ \mathcal{M}_1 \times R_2)$ and $e_5'' = (R_1 \times \mathcal{M}_2,\ \mathcal{M}_1 \times \langle 0 \rangle )$. Note that $\mathcal{M}_1 \times \mathcal{M}_2 \sim  \langle 0 \rangle \times R_2 \sim  \mathcal{M}_1 \times \langle 0 \rangle \sim \mathcal{M}_1^2 \times R_2 \sim  \mathcal{M}_1 \times \mathcal{M}_2$. Also, $\langle 0 \rangle \times R_2 \sim \mathcal{M}_1 \times R_2 \sim \mathcal{M}_1^2 \times R_2$. Then after inserting the edges $e_1''$, $e_2''$, $e_3''$ and $e_4''$, we can not insert the edge $e_5''$ without edge crossing in an embedding of $G''$ in $\mathbb{N}_2$. Hence, $R \cong R_1 \times R_2$, where $R_1$, $R_2$ are principal ideal rings with $\eta(\mathcal{M}_1) = 3$ and $\eta(\mathcal{M}_2) = 2$. 

Converse follows from Figure \ref{crosscapfig1} and Figure \ref{crosscapfig2}.
\begin{figure}[h!]
\centering
\includegraphics[width=0.7 \textwidth]{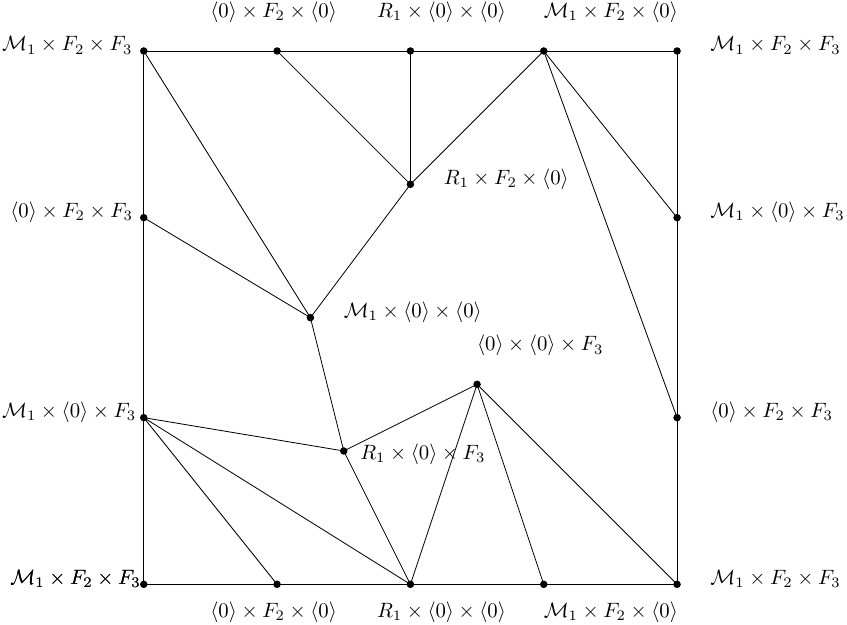}
\caption{Embedding of $\text{PIS}(R_1 \times F_2 \times F_3)$ in $\mathbb{N}_2$, where $\eta(\mathcal{M}_1) = 2$}
\label{crosscapfig1}
\end{figure}
\begin{figure}[h!]
\centering
\includegraphics[width=0.6 \textwidth]{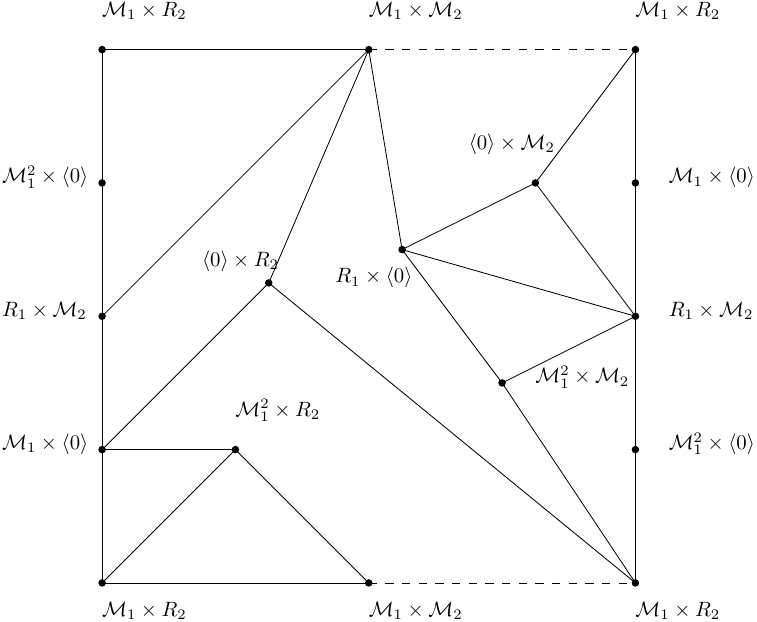}
\caption{Embedding of $\text{PIS}(R_1 \times R_2)$ in $\mathbb{N}_2$, where $\eta(\mathcal{M}_1) = 3$ and $\eta(\mathcal{M}_2) = 2$}
\label{crosscapfig2}
\end{figure}
\end{proof}

\vspace{.5cm}
\textbf{Acknowledgement:} We would like to thank the referees for their valuable suggestions which helped us to improve the presentation of the paper.

\section*{Declarations}

\textbf{Funding}: The first author gratefully acknowledges Birla Institute of Technology and Science (BITS) Pilani, India, for providing financial support.

\vspace{.3cm}
\textbf{Conflicts of interest/Competing interests}: There is no conflict of interest regarding the publishing of this paper. 

\vspace{.3cm}
\textbf{Availability of data and material (data transparency)}: Not applicable.

\vspace{.3cm}
\textbf{Code availability (software application or custom code)}: Not applicable.


 \end{document}